\tikzset{%
    symbol/.style={%
        ,draw=none
        ,every to/.append style={%
            edge node={node [sloped, allow upside down, auto=false]{$#1$}}}
    }
}
\newcommand{\Rr}{\mathbb{R}}
\newcommand{\Cc}{\mathbb{C}}
\newcommand{\Zz}{\mathbb{Z}}
\newcommand{\CT}{\mathcal{CT}}		
\newcommand{\D}{\mathcal{D}}	
\newcommand{\K}{\mathcal{K}}	
\newcommand{\N}{\mathcal{N}}
\renewcommand{\L}{\mathcal{L}}	
\newcommand{\C}{\mathcal{C}}	
\newcommand{\G}{\mathcal{G}}	
\newcommand{\A}{\mathcal{A}} 	
\newcommand{\End}{\mathrm{End}}	
\newcommand{\Hom}{\mathrm{Hom}}
\newcommand{\R}{\mathcal{R}}	
\newcommand{\sigmafull}{\sigma_{\mathrm{full}}}
\newcommand{\Tau}{\mathcal{T}}
\newcommand{\id}{\mathrm{id}}
\newcommand{\tEll}{\mathrm{Ell^{\otimes}}}	
\newcommand{\Ell}{\mathrm{Ell}}
\newcommand{\tSigma}{\Sigma^{\otimes}}
\newcommand{\ex}{\mathrm{ex}}
\newcommand{\B}{\mathcal{B}}
\newcommand{\M}{\mathcal{M}}
\newcommand{\Gop}{\mathrm{\G^{(0)}}} 
\newcommand{\scal}[2]{\langle #1, #2 \rangle}	
\newcommand{\ideal}[1]{\langle #1 \rangle} 	
\newcommand{\op}{\operatorname{Op}} 		
\newcommand{\ind}{\operatorname{ind}}
\newcommand{\potimes}{\hat{\otimes}} 
\newcommand{\pr}{\operatorname{pr}}
\newcommand{\iso}{\xrightarrow{\sim}}	
\newtheorem{Thm}{Theorem}[section]
\newtheorem{Lem}[Thm]{Lemma}
\newtheorem{Prop}[Thm]{Proposition}
\theoremstyle{definition}
\newtheorem{Def}[Thm]{Definition}
\newtheorem{Ex}[Thm]{Example}
\newtheorem{Rem}[Thm]{Remark}
\newtheorem{Not}[Thm]{Notation}
\begin{document}
\setcounter{page}{1}


\title{The Fredholm index for operators of tensor product type}


\author[Karsten Bohlen]{Karsten Bohlen}

\address{$^{1}$ Universit\"at Regensburg, Germany ~\textsf{karsten.bohlen@mathematik.uni-regensburg.de}}


\subjclass[2000]{Primary 47G30; Secondary 46L80, 46L85.}

\keywords{pseudodifferential, bisingular, index theory.}



\begin{abstract}
We consider bisingular pseudodifferential operators which are pseudodifferential operators of tensor product type. These operators are defined on the product manifold $M_1 \times M_2$, for closed manifolds $M_1$ and $M_2$. We prove a topological index theorem of product type. In addition, we show that the Fredholm index of elliptic bisingular operators equals the topological index, whenever the operator takes the form of an external tensor product of pseudodifferential operators, up to equivalence. To this end we construct a suitable double deformation groupoid and a Poincar\'e duality type homomorphism.
\end{abstract} \maketitle 



\section{Introduction}

In this note we investigate a class of bisingular pseudodifferential operators. The calculus of bisingular pseudodifferential operators addresses the fact that the tensor product $P_1 \otimes P_2$ in general does not furnish a classical pseudodifferential operator, where $P_1, P_2$ are operators on closed manifolds $M_1$ and $M_2$, respectively. Another example studied in \cite{rodino} is the so-called \emph{vector valued tensor product}, denoted by $P_1 \sharp P_2$. One of the well-known properties of the Fredholm index
is its behavior on such vector valued tensor products of operators: If $P_1, P_2$ are elliptic operators acting on Sobolev spaces as
bounded operators, then the product formula $\ind(P_1 \sharp P_2) = \ind(P_1) \ind(P_2)$ holds, cf. \cite{palais}. 
For an elliptic pseudodifferential operator $P \in \Psi^m(M, E_1, E_2)$ on a closed manifold $M$ and acting on the sections of vector bundles $E_j \to M$, the Atiyah-Singer index theorem yields
a formula of $\ind(P)$, expressing the index in terms of characteristic classes depending on the stable homotopy class of the principal symbol of $P$, \cite{as,asIV}.
A classical and natural question is to study more general tensor products which are Fredholm and to determine how the Fredholm index of such a generalized tensor product behaves, cf. \cite{asIV}.  
The calculus was first described in \cite{rodino}. These operators appear in the consideration of boundary value problems on bicylinders and are also relevant in the study of Toeplitz operators cf. \cite{bgpr, bs, nr}. It is our aim in this note to investigate the index problem, originally stated in \cite{rodino} as well as \cite{asIV}.

The original index problem for smooth closed manifolds can be traced back to I. Gel'fand \cite{gelfand}, who observed the homotopy invariance of the index.
The literature on index theory is vast. Of recent interest has been the index theory of Dirac operators on Lorentzian manifolds, cf. \cite{bs}. We also mention the study of index theory on singular and stratified manifolds, e.g. the work of E. Schrohe, B. W. Schulze \cite{schroheschulze}. As well as the approach to index theory using Lie groupoids \cite{as2011, ds, dl, dln, mn, pz} and the recent solution of the index problem for pseudodifferential operators on Lie groupoids \cite{bl, bohlenschrohe}. Fredholm conditions for operators in various cases are discussed e.g. in \cite{cnq, georgescu, rrs, mantoiu} and the recent work of A. Baldare, R. C\^{o}me, M. Lesch and V. Nistor \cite{bcln} as well as the references therein. 
\newpage

\subsection*{Overview}

\subsubsection*{The problem}

We make use of the notation $\Psi^{m_1, m_2}(M_1 \times M_2)$ to denote the class of bisingular pseudodifferential operators, of multiorder $(m_1, m_2) \in \Rr^2$. For brevity sake, we refer to these operators as \emph{$\otimes$-operators} throughout the remainder of this work. We assume that these are \emph{classical} (sometimes called polyhomogeneous) operators, i.e. they admit asymptotic expansions, cf. \cite{rodino}. 
%

We want to study the following \emph{problem}: Let $P \in \Psi^{m_1, m_2}(M_1 \times M_2, E, F)$, acting on smooth sections of the vector bundles $E, F \to M_1 \times M_2$, be a $\otimes$-operator which is Fredholm as a bounded operator on the scale of Sobolev spaces introduced in \cite{rodino}: 
\[
P \colon H^{s_1, s_2}(M_1 \times M_2, E, F) \to H^{s_1 - m_1, s_2 - m_2}(M_1 \times M_2, E, F), \ (s_1, s_2) \in \Rr^2.
\] 

We study the problem of finding an Atiyah-Singer type index theorem for the Fredholm index $\ind(P)$.
Fredholm operators in the $\otimes$-calculus are precisely the operators which are $\otimes$-elliptic, i.e. operators whose principal symbol mappings are pointwise invertible, cf. Theorem \ref{Thm:telliptic} in section \ref{section:otimes}. The principal symbol mappings take values in families of pseudodifferential operators in the most general case. If $M_1, M_2$ are connected, the structure of the vector bundles $E, F \to M_1 \times M_2$ simplifies. To wit, if $M_1, M_2$ are connected, the pullback bundle $i_{x_1}^{\ast} E$ to $M_2$ along the inclusion $i_{x_1} \colon M_2 \to M_1 \times M_2, \ x_2 \mapsto (x_1, x_2)$ for $x_1 \in M_1$ fixed is a fiber bundle over $M_1$ with typical fiber $i_{x_1}^{\ast} E$ \cite{bs}[Cor. 5.2]. We make a stronger simplifying assumption. The case of general vector bundles will be considered in future work.

\subsubsection*{Assumption} 

We assume the vector bundles $E, F \to M_1 \times M_2$ throughout the document to be trivial along the $M_1, M_2$ fibers, i.e. there are vector bundles $E_i \to M_i, \ F_i \to M_i, \ i = 1,2$ as well as fixed isomorphisms of vector bundles $E \cong E_1 \times E_2, \ F \cong F_1 \times F_2$. Hence $E, F$ are direct products, in the sense that they take the form of the Whitney sums $E = \pr_1^{\ast} E_1 \oplus \pr_2^{\ast} E_2$, $F = \pr_1^{\ast} F_1 \oplus \pr_2^{\ast} F_2$ for $\pr_i \colon M_1 \times M_2 \to M_i, \ i = 1,2$ denoting the projections.

\subsubsection*{Approach}

We consider:
\begin{itemize}
\item A $\otimes$ operator $P$ of order $(m_1, m_2)$ has the operator-valued principal symbol mappings
\begin{align*}
& \sigma_1(P) \colon T^{\ast} M_1 \setminus \{0\} \to \Psi^{m_2}(M_2, E_2, F_2), \\ 
& \sigma_2(P) \colon T^{\ast} M_2 \setminus \{0\} \to \Psi^{m_1}(M_1, E_1, F_1)
\end{align*}
and the scalar principal symbol $\sigma_{m_1, m_2}(P) \colon T^{\ast} M_1 \setminus \{0\} \times T^{\ast} M_2 \setminus \{0\} \to \Cc$. These symbol mappings are compatible, i.e. $\sigma_{m_1, m_2}(P)(x_1, \xi_1, x_2, \xi_2)$ equals $\sigma^{(2)}(\sigma_1(P)(x_1, \xi_2))(x_2, \xi_2) = \sigma^{(1)}(\sigma_2(P)(x_2, \xi_2))(x_1, \xi_1)$, where $\sigma^{(i)}$ denote the respective standard principal symbols, cf. \eqref{comp}.
\item The completion $\overline{\Psi^{0,0}}$ of the $(0,0)$-order operators in the norm of bounded operators $L^2(M_1 \times M_2, E, F) \to L^2(M_1 \times M_2, E, F)$. We have the isomorphism of $C^{\ast}$-algebras \cite[Theorem 3.4]{bohlen2}
\[ 
\overline{\Psi^{0}(M_1, E_1, F_1)} \otimes \overline{\Psi^{0}(M_2, E_2, F_2)} \cong \overline{\Psi^{0,0}(M_1 \times M_2, E, F)}.
\]

\item The continuous extension of the \emph{full symbol} $\sigma_1 \oplus \sigma_2 \colon \overline{\Psi^{0,0}} \to \Sigma^{\otimes}$ which is a homomorphism that identifies the $C^{\ast}$-algebraic quotient $\Sigma^{\otimes} \cong \overline{\Psi^{0,0}} / \K$ by the ideal of compact operators $\K(L^2(M_1 \times M_2, E, F))$ (cf. section \ref{section:otimes}).

\item The inclusion of the continuous functions into the calculus $\mu \colon C(M_1 \times M_2) \to \overline{\Psi^{0,0}}$, via their action as multiplication operators. 

\item The group $\tEll(M_1 \times M_2)$ of multiorder $(0,0)$ operators which are $\otimes$-elliptic, up to stable homotopy (within the class of $\otimes$-elliptic operators) and endowed with the direct sum as composition. 

\item The \emph{relative $K$-theory} $K_0(\overline{\mu})$ of the map $\overline{\mu} = (\sigma_1 \oplus \sigma_2) \circ \mu$ (introduced in Section \ref{section:poincare}).

\item A double deformation groupoid $\Tau \rightrightarrows M_1 \times M_2 \times [0,1]^2$ given as the product groupoid of the tangent groupoid of $M_1$ and the tangent groupoid of $M_2$, cf. \cite[p.100]{connes}.
\end{itemize}

The approach to the problem relies fundamentally on a reduction in $K$-theory which is facilitated by a Poincar\'e duality type homomorphism. By definition of $\Tau$, the $C^{\ast}$-algebra of the double deformation groupoid, $C^{\ast}(\Tau)$, is $K$-equivalent to $C_0(T^{\ast}(M_1 \times M_2))$, cf. Lem. \ref{Lem:SES}. The main argument is based on relative $K$-theory applied to the following diagram: 
\begin{figure}[H]
\begin{tikzcd}[row sep=huge, column sep=huge, text height=1.5ex, text depth=0.25ex]
& C(M_1 \times M_2) \arrow{d}{\mu} \arrow{dr}{\overline{\mu}} & \\
\K \arrow{r} & \overline{\Psi^{0,0}} \arrow{r}{\sigma_1 \oplus \sigma_2} & \tSigma
\end{tikzcd} 
\end{figure}

The relative $K$-theory of the scalar principal symbol $K_{\ast}(\sigma_1 \oplus \sigma_2)$ is isomorphic to $K_{\ast}(\K) \cong \Zz$, since $\sigma_1 \oplus \sigma_2$ is surjective. We then define an excision map $\mu_{\ast} \colon K_0(\overline{\mu}) \to K_0(\sigma_1 \oplus \sigma_2)$ between the associated relative $K$-theory groups. A diagram chase, which uses the structure of the double deformation groupoid, yields the construction of the homomorphism of groups $\Theta$ of the following diagram (cf. section \ref{section:poincare}). Altogether, we obtain the diagram with commuting upper triangles:

\begin{figure}[H]
\begin{tikzcd}[row sep=huge, column sep=huge, text height=1.5ex, text depth=0.25ex]
& \arrow[dl, "\simeq"'] K_0(\sigma_1 \oplus \sigma_2) & \\
\Zz & \arrow{l}{\partial_{nc}} K_1(\Sigma) \arrow[d, dashed]{} \arrow{u}{} \arrow{r}{j_{\ast}} & \arrow{ul}{\mu_{\ast}} K_0(\overline{\mu}) \arrow[d, "\Theta"] \\
& \arrow{ul}{\ind} \tEll(M_1 \times M_2) \arrow[r, swap, "\chi"] \arrow[ur, "\simeq"', "\nu"] & \arrow[bend left=60]{llu}{\ind_a^{\otimes}} K_0(C^{\ast}(\Tau))  
\end{tikzcd} 
\end{figure}

The homomorphism $\chi \colon \tEll(M_1 \times M_2) \to K_0(C^{\ast}(\Tau))$ is the composition $\Theta \circ \nu$. Since the symbol space $\Sigma$ is non-commutative, we cannot expect a "local" index formula, i.e. to be able to express the index of $P$ solely in terms of the stable homotopy data given by the principal symbols. Therefore, the homomorphism $\Theta$ does not in general furnish an isomorphism which would reduce the index computation to the computation of the geometrically defined index map $\ind_a^{\otimes}$.  The topological index theorem of a $\otimes$-elliptic operator can be viewed as the extension to the case of manifolds of the local index formula from \cite{bohlen}[section 5].

\subsubsection*{Result}

The main result Theorem \ref{Thm:PD} therefore states that if $P$ is a $\otimes$-elliptic operator equal up to equivalence in relative $K$-theory to an external tensor product $P_1 \sharp P_2$, then the index equals the topological index associated to the groupoid $\Tau$. To prove the result we need to show that the index can be reduced to the homomorphism $\Theta$. By naturality, the index maps in the previous diagram are reduced to the computation of a geometric $K$-homology group, which turns out to be the ordinary $K$-homology of the Lie groupoid $\Tau$ introduced by Connes. This furnishes a product index formula for operators of this type. Our results are consistent with the $K$-theory computations of \cite{bohlen} and with the topological index theorem of Melrose, Rochon \cite{mr}. The authors of loc. cit. consider a class very similar to $\otimes$-operators. In view of the structure of the Poincar\'e duality type homomorphism, one might venture the guess that the index problem for $\otimes$-operators is always reducible to the index problem of standard H\"ormander pseudodifferential operator on the product manifold. However, as we recognize and the authors of \cite{mr} already noted as well, there exist $K$-theoretic obstructions to such a reduction. 
The reference \cite{nr} contains an expression of the Fredholm index of $\otimes$ Fredholm operators in terms of regularized traces. 


\section{The $\otimes$-calculus}

\label{section:otimes}

\subsection{The $\otimes$-calculus for groupoids}

We extend the calculus of $\otimes$-operators, given in \cite{rodino}, to the case of products of Lie groupoids. The standard H\"ormander pseudodifferential calculus was extended to the case of Lie groupoids by V. Nistor, A. Weinstein and P. Xu in \cite{nwx}. We rely on the constructions made in that article. The adaptations necessary to define the corresponding $\otimes$-calculus for products of Lie groupoids are straightforward.  
Let $\G_1 \rightrightarrows M_1, \ \G_2 \rightrightarrows M_2$ be Lie groupoids over closed manifolds $M_1$ and $M_2$. We
study the product groupoid $\G_1 \times \G_2 \rightrightarrows M_1 \times M_2$ and denote by $\varrho \colon \A(\G_1 \times \G_2) \to T(M_1 \times M_2)$
the corresponding Lie algebroid with anchor map $\varrho$. Note that we have the isomorphism as Lie algebroids $\A(\G_1 \times \G_2) \cong \A(\G_1) \times \A(\G_2)$ \cite{mackenzie}[ch. 4.2. \& Prop. 4.3.10]. We oftentimes use the notation $\A := \A_1 \times \A_2$ for the product Lie algebroid, when the underlying product Lie groupoid $\G := \G_1 \times \G_1$ is understood.

\begin{Def}
The symbol space $S^{m_1, m_2}(\A^{\ast}(\G_1 \times \G_2))$ consists of smooth functions $a \in C^{\infty}(\A^{\ast}(\G_1 \times \G_2))$ such that 
for each $U_1 \times U_2 \subset M_1 \times M_2$ open, trivializing $\A(\G_1 \times \G_2)_{|U_1 \times U_2} \cong (U_1 \times \Rr^{n_1}) \times (U_2 \times \Rr^{n_2})$ we have the uniform estimates:
\[
|\partial_{x_1}^{\alpha_1} \partial_{\xi_1}^{\beta_1} \partial_{x_2}^{\alpha_2} \partial_{\xi_2}^{\beta_2} a(x_1, \xi_1, x_2, \xi_2)| \leq C_{\alpha \beta} \ideal{\xi_1}^{m_1 - |\beta_1|} \ideal{\xi_2}^{m_2 - |\beta_2|}, 
\] 

for each $(x_1, x_2) \in K \subset M_1 \times M_2$ compact. 

\label{Def:tsymbols}
\end{Def}

We are concerned throughout this article with classical symbols, i.e. symbols admitting a bihomogeneous expansion in the sense of \cite{nr, rodino}. By abuse of notation, we keep denoting by $S^{m_1, m_2}$ the sets of classical symbols. We describe next the quantization of a given symbol, by which we mean the rule that associates an operator to a given symbol. To this end we extend the quantization for Lie groupoids as described in \cite{monthubert}. See also \cite{ln, nwx}. The construction depends on choices, but the choices will no longer matter once we work with the completions of the zero order calculus and the corresponding groupoid $C^{\ast}$-algebras.

\begin{Def}
Let $\chi \in C_c^{\infty}(\G_1 \times \G_2)$ be a cutoff function such that $\chi \geq 0$, $\chi^{-1}(0) = M_1 \times M_2$, $\varphi \colon U \to \A := \A_1 \times \A_2$ a $C^{\infty}$-embedding given by the product of tubular neighborhood embeddings $\varphi_1 \times \varphi_2$ such that the following diagram commutes
\[
\xymatrix{
U_1 \times U_2 \ar[d]_{r} \ar[r]^{\varphi_1 \times \varphi_2} & \A_1 \times \A_2 \ar[d]_{\pi} \\
M_1 \times M_2 \ar[r] & M_1 \times M_2
}
\]

and $\varphi(\gamma_1, \gamma_2) = 0 \Leftrightarrow (\gamma_1, \gamma_2) \in M_1 \times M_2$ as well as $T_{(x_1, x_2)} \G_{(x_1, x_2)}$ is isomorphic to the normal bundle $N_{(x_1, x_2)}(\Delta_{M_1} \times \Delta_{M_2})$ via $d \varphi$. 
Define $\mu^{\otimes}$ as the appropriate choice of Haar system on $\G_1 \times \G_2$ as a product system $\mu_{s^{\otimes}(\gamma_1, \gamma_2)}^{\otimes} = \mu_{s(\gamma_1)}^{1} \otimes \mu_{s(\gamma_2)}^{2}, \ (\gamma_1, \gamma_2) \in \G_1 \times \G_2$
for Haar systems $\mu^1$ on $\G_1$ and $\mu^2$ on $\G_2$ respectively. The quantization is defined by $S_{cl}^{m_1, m_2}(\A^{\ast}) \ni a \mapsto \op_{\chi}^{\otimes}(a)(f)(\gamma)$
\begin{align}
&= \int_{\G_{s^{\otimes}(\gamma)}} \int_{\A_{r^{\otimes}(\gamma)}^{\ast}} \chi(\gamma \eta^{-1}) e^{i \scal{\varphi(\gamma \eta^{-1})}{\xi}} a(r^{\otimes}(\gamma), \xi) f(\eta)\, d\xi d\mu_{s^{\otimes}(\gamma)}^{\otimes}(\eta). \label{quant}
\end{align}
\label{Def:quantization}
\end{Def}

The Schwartz kernel of the $\otimes$ pseudodifferential operator $\op_{\chi}(a)$ is a compactly supported distribution conormal to $\Delta_{M_1} \times \Delta_{M_2} \hookrightarrow \G_1 \times \G_2$. Denote by $\Psi^{m_1, m_2}(\G_1 \times \G_2)$ the class of $\otimes$ pseudodifferential operators of order $(m_1, m_2) \in \Rr^2$. These correspond to the conormal distributions, that are endowed with a groupoid convolution product, analogous to the calculus for ordinary Lie groupoids, see \cite{ln, monthubert, nwx} for the details.
We set for the residual classes
\begin{align}
& \Psi^{-\infty, -\infty}(\G_1 \times \G_2) := \bigcap_{(m_1, m_2) \in \Rr^2} \Psi^{m_1, m_2}(\G_1 \times \G_2), \label{residual} \\
& \Psi^{m_1, -\infty}(\G_1 \times \G_2) := \bigcap_{m_2 \in \Rr} \Psi^{m_1, m_2}(\G_1 \times \G_2), \\
& \Psi^{-\infty, m_2}(\G_1 \times \G_2) := \bigcap_{m_1 \in \Rr} \Psi^{m_1, m_2}(\G_1 \times \G_2). 
\end{align}  

\begin{Prop}
Let $a \in S^{m_1, m_2}(\A^{\ast})$ and let $\chi \in C_c^{\infty}(\G_1 \times \G_2), \ \widetilde{\chi} \in C_c^{\infty}(\G_1 \times \G_2)$ be cutoff functions. Then $\op_{\chi}^{\otimes}(a) - \op_{\widetilde{\chi}}^{\otimes}(a) \in \Psi^{-\infty, -\infty}(\G_1 \times \G_2)$.  
\label{Prop:cutoff}
\end{Prop}

\begin{proof}
Write the integral kernel of the difference operator
\[
k^{\otimes}(\gamma) = \int_{\A_{r^{\otimes}(\gamma)}^{\ast}} (\widetilde{\chi}(\gamma) - \chi(\gamma)) e^{i \scal{\varphi(\gamma)}{\xi}} a(r^{\otimes}(\gamma), \xi)\,d\xi. 
\]

The phase function has critical points for $\varphi(\gamma) = 0$. For $\varphi(\gamma) \not= 0$ there is a vector field $L$ such that $L e^{i\scal{\varphi}{-}} = e^{\scal{i \varphi(\gamma)}{-}}$. For any $k$ with $m_1 + m_2 + k < -(n_1 + n_2) +1$ we can write the integral kernel as
\[
k^{\otimes}(\gamma) = \int (\widetilde{\chi}(\gamma) - \chi(\gamma)) e^{i \scal{\varphi(\gamma)}{\xi}} (L^t)^k a(r^{\otimes}(\gamma), \xi)\,d\xi. 
\]

This yields the result.
\end{proof}

Fix the smooth vector bundles $E, F \to M_1 \times M_2$. 
Denote by $\Hom(E, F) \to M_1 \times M_2$ the homomorphism bundles with fibers 
$\Hom(E, F)_{(x_1, x_2)} = \Hom(E_{(x_1, x_2)}, F_{(x_1, x_2)}), \ (x_1, x_2) \in M_1 \times M_2$. Fix the pullback bundles ($r^{\otimes} = r_1 \times r_2$ denoting the range map of the product groupoid $\G_1 \times \G_2$):
\[
(r^{\otimes})^{\ast} E, \ (r^{\otimes})^{\ast} F, \ (r^{\otimes})^{\ast} \Hom(E, F) = \Hom((r^{\otimes})^{\ast} E, (r^{\otimes})^{\ast} F) \to \G.
\]
The space $\Psi^{m_1, m_2}(\G_1 \times \G_2)$ is a local $C_c^{\infty}(\G_1 \times \G_2)$-module and we define:
\begin{align}
& \Psi^{m_1, m_2}(\G_1 \times \G_2, E, F) := \Psi^{m_1, m_2}(\G_1 \times \G_2) \otimes_{C_c^{\infty}(\G_1 \times \G_2)} (r^{\otimes})^{\ast} \Hom(E, F). \label{tPsDo}
\end{align}


The residual operators $\Psi^{-\infty, \infty}(\G_1 \times \G_2)$ are canonically isomorphic to the convolution $\ast$-algebra $C_c^{\infty}(\G_1 \times \G_2)$ by an argument completely analogous to the proof of \cite[Theorem 6]{nwx}. Denote by $L^2(\G_1 \times \G_2)$ the $L^2$-space on the product Lie groupoid $\G_1 \times \G_2$ with respect to the Haar system $\mu^{\otimes}$ and norm denoted by $\|\cdot\|_2$. We refer to \cite{ln}[section 7] for the definition of the Sobolev spaces $H^{s}(\G_i, r_i^{\ast} E_i, r_i^{\ast} F_i), \ i = 1,2$ on Lie groupoids. The $\otimes$ Sobolev spaces are defined as the completed projective tensor product:
\begin{align}
H^{s_1, s_2}(\G_1 \times \G_2, (r^{\otimes})^{\ast} E, (r^{\otimes})^{\ast} F) &:= H^{s_1}(\G_1, r_1^{\ast} E_1, r_1^{\ast} F_1) \potimes H^{s_2}(\G_2, r_2^{\ast} E_2, r_2^{\ast} F_2). \label{space}
\end{align}

We often need to assume $E = F$, whenever compositions need to be defined and the vector bundles have to match. It is our aim to show that the operators in the extended $\otimes$-calculus act boundedly on the appropriate Sobolev spaces. Let $A_i \in \Psi^1(\G_i, r_i^{\ast} E_i) = \Psi^1(\G_i) \otimes_{C_c^{\infty}(\G_i)} \End(E_i)$ be an unbounded elliptic operator in the groupoid calculus, then define via the functional calculus the operators $\Lambda_i := (1 + A_i^{\ast} A_i)^{\frac{1}{2}}$. These furnish isomorphisms for each $s \in \Rr$
\[
\Lambda_i \colon H^s(\G_i, r_i^{\ast} E_i) \iso H^{s-1}(\G_i, r_i^{\ast} E_i). 
\] 

The order reduction operator $\Lambda^{l,k} := \Lambda_1^{l} \otimes \Lambda_2^{k}$ then furnishes isomorphisms on the $\otimes$ Sobolev spaces\footnote{Note the different order convention in \cite{ln}.} \cite[Corollary 4]{ln}
\[
H^{s_1, s_2}(\G_1 \times \G_2, r^{\ast} E) \iso H^{s_1-l, s_2-k}(\G_1 \times \G_2, r^{\ast} E).
\]

\begin{Thm}
Let $P \in \Psi^{m_1, m_2}(\G_1 \times \G_2, r^{\ast} E)$, then $P$ extends to a bounded linear operator:
\[
P \colon H^{s_1, s_2}(\G_1 \times \G_2, (r^{\otimes})^{\ast} E) \to H^{s_1 - m_1, s_2 - m_2}(\G_1 \times \G_2, (r^{\otimes})^{\ast} E).
\]
\label{Thm:Sobolev}
\end{Thm}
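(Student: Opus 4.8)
The plan is to transcribe the classical product-manifold argument of \cite{rodino} to the groupoid setting: reduce in two steps to the $L^2$-boundedness of multiorder $(0,0)$ operators, and then prove that by propagating the known $L^2$-boundedness of ordinary pseudodifferential operators on a \emph{single} Lie groupoid \cite{nwx} across the two factors.

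\emph{Reductions.} First I would pass to scalar operators: since $\Psi^{m_1,m_2}(\G_1\times\G_2,r^\ast E,r^\ast F)$ is built from the scalar calculus by tensoring with $r^\ast\Hom(E,F)$ over the local module $C_c^\infty(\G_1\times\G_2)$, a partition of unity on $M_1\times M_2$ refined to product charts trivializing $E$ and $F$ writes $P$ as a finite matrix of scalar $\otimes$-operators plus a uniformly supported smoothing operator, the latter being continuous between all the spaces $H^{s_1,s_2}$. Next I would reduce to order $(0,0)$ with target $L^2$: the $\otimes$-calculus supplies order reductions $\Lambda^{t_1,t_2}:=\Op^\otimes(\ideal{\xi_1}^{t_1}\ideal{\xi_2}^{t_2})\in\Psi^{t_1,t_2}(\G_1\times\G_2)$, which are $\otimes$-elliptic and, by the very definition \eqref{space} of the $\otimes$-Sobolev norms, implement the isometries $H^{s_1,s_2}\iso L^2(\G_1\times\G_2)$; composing $P$ on the right with a parametrix of $\Lambda^{s_1,s_2}$ and on the left with $\Lambda^{s_1-m_1,s_2-m_2}$, and using again that smoothing operators are negligible, the asserted boundedness becomes the $L^2$-boundedness of an operator $Q\in\Psi^{0,0}(\G_1\times\G_2)$.

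\emph{Core step.} To prove that $Q=\Op^\otimes(a)$ with $a\in S^{0,0}(\A^\ast(\G_1\times\G_2))$ is bounded on $L^2(\G_1\times\G_2)$, I would exploit that $\Op^\otimes$ is built from the product Haar system $\mu^\otimes=\mu^1\otimes\mu^2$ and the product Fourier kernel, so that the $\otimes$-quantization factors as quantization in the $\G_1$-directions followed by quantization in the $\G_2$-directions: on $L^2(\G_1\times\G_2)\cong L^2(\G_2)\potimes L^2(\G_1)$ the operator $Q$ is the quantization over $\A^\ast(\G_2)$ of the operator-valued symbol $(x_2,\xi_2)\mapsto\Op^{\G_1}\!\big(a(\cdot,\cdot,x_2,\xi_2)\big)$. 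By Definition \ref{Def:tsymbols} with $m_1=m_2=0$, for each fixed $(x_2,\xi_2)$ the function $a(\cdot,\cdot,x_2,\xi_2)$ is an order-zero symbol on $\A^\ast(\G_1)$ whose seminorms are bounded uniformly in $(x_2,\xi_2)$, and each derivative in $(x_2,\xi_2)$ only improves the $\ideal{\xi_2}$-decay; hence the $L^2$-boundedness of order-zero groupoid pseudodifferential operators on $\G_1$ \cite{nwx} — whose norm bound depends only on finitely many of those seminorms — shows that $(x_2,\xi_2)\mapsto\Op^{\G_1}(a(\cdot,\cdot,x_2,\xi_2))$ is an order-zero symbol on $\A^\ast(\G_2)$ with values in $\B(L^2(\G_1))$. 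Applying the $\B(L^2(\G_1))$-valued (equivalently, Hilbert-$C^\ast$-module-valued) version of the same boundedness result on $\G_2$ then gives that $Q$ is bounded on $L^2(\G_1\times\G_2)$. Alternatively one observes that a multiorder $(0,0)$ symbol is of H\"ormander type $S^0_{0,0}$ on $\A^\ast(\G_1\times\G_2)$ and invokes a Calder\'on--Vaillancourt estimate, but the iterated argument is preferable because it only uses the $(1,0)$-calculus already present on each factor.

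\emph{Main obstacle.} The only genuinely non-routine ingredient is the last invocation in the core step: one needs the single-groupoid $L^2$-boundedness theorem in the form where the symbols — and therefore the operator-norm bounds they produce — are allowed to take values in an auxiliary $C^\ast$-algebra, so that the estimate obtained on $\G_1$ can be fed into the estimate on $\G_2$. This is exactly the form in which \cite{nwx} prove boundedness, via H\"ormander's square-root trick, which is insensitive to replacing scalar coefficients by $C^\ast$-algebra-valued ones; hence the adaptation to the $\otimes$-calculus is direct, and the remaining ingredients (localization, order reductions, and the composition law of the calculus) are the standard ones of \cite{rodino}.
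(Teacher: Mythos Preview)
Your proposal is correct and is precisely the ``adapt \cite{rodino} to groupoids'' argument that the paper invokes in lieu of a proof; the paper itself gives no details beyond that sentence, so your reductions via order reductions $\Lambda^{t_1,t_2}$ and the iterated single-factor $L^2$-boundedness are exactly what is meant. If anything, you have supplied more than the paper does: the observation that the H\"ormander square-root trick in \cite{nwx} is insensitive to $C^\ast$-valued coefficients is the only point that needs a word of justification, and you have identified it correctly.
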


\begin{proof}
The application of the order reduction operators leaves, without loss of generality, to prove that a $\otimes$-operator $P$ of order $(0,0)$ is bounded from $L^2(\G_1 \times \G_2, (r^{\otimes})^{\ast} E)$ to $L^2(\G_1 \times \G_2, (r^{\otimes})^{\ast} E)$. By writing the operator $P$ in local groupoid charts as in \cite[Theorem 7]{nwx}, we can argue as in the proof of \cite[Theorem 1.8]{rodino}, to see that it suffices to prove the assertion for the standard pseudodifferential operators on groupoids. The arguments leading up to \cite[Corollary 3]{ln} then suffice to conclude.
\end{proof}

The principal symbol structure of the $\otimes$-calculus can become complicated on a product groupoid. We have to take into account the generalizations of the operator-valued symbols, the scalar symbol and, in the case of groupoids, also the indicial symbol \cite[Theorem 10]{ln}. Let us denote by $\G := \G_1 \times \G_2$ the product groupoid. Assume that the base $M := M_1 \times M_2$ is a compact manifold with boundary or corners. Then the \emph{indicial symbol} 
\[
\R_{H} \colon \Psi^{m_1, m_2}(\G, (r^{\otimes})^{\ast} E) \to \Psi^{m_1, m_2}(\G_{H}, (r^{\otimes})^{\ast} E_{|H})
\]

is defined as the restriction of $\G$-equivariant family 
\begin{align*}
P &= (P_{x_1, x_2})_{(x_1, x_2) \in M} \in \Psi^{m_1, m_2}(\G, (r^{\otimes})^{\ast} E), \\
\R_{H}(P) &= (P_{x_1, x_2})_{(x_1, x_2) \in H}
\end{align*}

where $H$ is some closed boundary hyperface of $M$ that is invariant with regard to $\G$, i.e. $(s^{\otimes})^{-1}(H) = (r^{\otimes})^{-1}(H)$. 

Denote by $p_1 \colon \G_1 \times \G_2 \to \G_1$, $p_2 \colon \G_1 \times \G_2 \to \G_2$ the canonical projections. 
The remaining symbol structure is determined by the three symbol mappings:
\begin{align*}
& \sigma_1 \colon \Psi^{m_1, m_2}(\G, (r^{\otimes})^{\ast} E) \to C^{\infty}(\A_1^{\ast} \setminus \{0\}, \Psi^{m_2}(\G_2, r_2^{\ast} E_2)), \\
& \sigma_1(P)(x_1, \xi_1) \in \Psi^{m_2}(\G_2, ((r^{\otimes})^{\ast} E)_{|\{x_1\} \times M_2}) \cong \Psi^{m_2}(\G_2, r_2^{\ast} E_2), \\
& \sigma_2 \colon \Psi^{m_1, m_2}(\G, (r^{\otimes})^{\ast} E) \to C^{\infty}(\A_2^{\ast} \setminus \{0\}, \Psi^{m_1}(\G_1, r_1^{\ast} E_1)), \\
& \sigma_2(P)(x_2, \xi_2) \in \Psi^{m_1}(\G_1, ((r^{\otimes})^{\ast} E)_{|M_1 \times \{x_2\}}) \cong \Psi^{m_1}(\G_1, r_1^{\ast} E_1), \\
& \sigma_{m_1, m_2} \colon \Psi^{m_1, m_2}(\G, (r^{\otimes})^{\ast} E) \to C^{\infty}(\A_1^{\ast} \setminus \{0\} \times \A_2^{\ast} \setminus \{0\}, (r^{\otimes})^{\ast} \Hom(E, F)).
\end{align*}

Here $\sigma_1, \sigma_2$ are operator valued symbols and $\sigma_{m_1, m_2}$ is referred to as the \emph{scalar symbol}. Denote by $\sigma^{(j)} \colon \Psi^{m_j}(\G_j) \to C^{\infty}(\A_j^{\ast} \setminus \{0\})$ the ordinary principal symbols, $j = 1,2$. Then the compatibility condition holds:
\begin{align}
& \sigma_{m_1, m_2}(P)(x_1, \xi_1, x_2, \xi_2) = \sigma^{(2)}(\sigma_1(P)(x_1, \xi_2))(x_2, \xi_2) = \sigma^{(1)}(\sigma_2(P)(x_2, \xi_2))(x_1, \xi_1). \label{comp}
\end{align}

We refer to $\sigmafull^{\G} := (\sigma_1, \sigma_2, \oplus_{H : \text{hyperface}} \R_H)$ as the \emph{full symbol map}. For the precise definition of the principal symbol mappings, invariantly defined on manifolds, we refer to \cite{rodino}. 




\subsection{The $\otimes$-calculus on manifolds}

The $\otimes$-calculus was defined by L. Rodino in 1975, \cite{rodino}. We proceed to recall some of the notation and fundamental properties of this calculus, that are used throughout this work. Note that the previously defined version of the $\otimes$-calculus over products of Lie groupoids specializes to Rodino's calculus in the following way. The product of closed smooth manifolds $M \times M$ is in particular a Lie groupoid with the groupoid structure of a pair groupoid, i.e. $(x_1, x_2) \cdot (x_2, x_3) = (x_1, x_3)$ the multiplication and $(x_1, x_2)^{-1} = (x_2, x_1)$ the inverse for $(x_1, x_2), \ (x_2, x_3) \in M \times M$. Therefore, if we set $\G_1 = M_1 \times M_1$ and $\G_2 = M_2 \times M_2$ where $M_1, \ M_2$ are closed manifolds, the resulting calculus $\Psi^{m_1, m_2}(\G_1 \times \G_2)$ specializes to Rodino's calculus $\Psi^{m_1, m_2}(M_1 \times M_2)$. Recall the definition of ellipticity for $\otimes$-operators.

\begin{Def}
Let $P \in \Psi^{m_1, m_2}(M_1 \times M_2,  E, F)$, then $P$ is called $\otimes$-elliptic if $\sigma_j(P)$ are pointwise invertible for $j = 1,2$ and $\sigma_{m_1, m_2}(P)$ is pointwise invertible. 
\label{Def:telliptic}
\end{Def}

We note the following property \cite{bs}[Thm. 2.4]:
\begin{Thm}
The operator $P \in \Psi^{m_1, m_2}(M_1 \times M_2, E, F)$ is $\otimes$-elliptic if and only if it is Fredholm as an operator $P \colon H^{s_1, s_2}(M_1 \times M_2, E, F) \to H^{s_1 - m_1, s_2 - m_2}(M_1 \times M_2, E, F)$. 
\label{Thm:telliptic}
\end{Thm}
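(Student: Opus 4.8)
The plan is to establish the equivalence ``$\otimes$-elliptic $\Leftrightarrow$ Fredholm'' by constructing a parametrix modulo compacts from the invertibility of the three symbols, and conversely by extracting the invertibility of the symbols from Fredholmness via the structure of the symbol algebra. Throughout we work on closed $M_1, M_2$ so that the residual ideal $\Psi^{-\infty,-\infty}(M_1 \times M_2, E, F) = C^{\infty}(M_1 \times M_2 \times M_1 \times M_2, \ldots)$ acts compactly on every Sobolev space $H^{s_1,s_2}$; this is the analytic input that converts ``invertible modulo residual'' into ``Fredholm''.

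First I would prove the direction $\otimes$-elliptic $\Rightarrow$ Fredholm. Suppose $\sigma_1(P)$, $\sigma_2(P)$ and $\sigma_{m_1,m_2}(P)$ are pointwise invertible. One builds a parametrix $Q \in \Psi^{-m_1,-m_2}(M_1 \times M_2, F, E)$ in stages, mirroring the classical two-step construction but now for the bigraded calculus. Step one: invert the operator-valued symbol $\sigma_1(P) \in C^{\infty}(\A_1^{\ast} \setminus \{0\}, \Psi^{m_2}(M_2))$, noting that by the compatibility relation \eqref{comp} its own principal symbol $\sigma^{(2)}(\sigma_1(P)) = \sigma_{m_1,m_2}(P)$ is invertible, so $\sigma_1(P)$ is a smooth family of elliptic operators on $M_2$ and hence invertible modulo $C^{\infty}(\A_1^{\ast}\setminus\{0\}, \Psi^{-\infty}(M_2))$; homogeneity in $\xi_1$ lets one patch near $\xi_1 = 0$ using invertibility of $\sigma_{m_1,m_2}$. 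Quantizing the inverse family yields $Q_1$ with $P Q_1 - \Id \in \Psi^{-\infty, m_2 - \infty}$-type remainder in the first slot; symmetrically invert $\sigma_2$ to get $Q_2$. Step two: combine $Q_1, Q_2$ into a single $Q$ so that both $PQ - \Id$ and $QP - \Id$ lie in $\Psi^{-\infty,-\infty}(M_1 \times M_2, \ldots)$, using a Neumann-series / asymptotic-summation argument within the $\otimes$-calculus (the calculus is closed under composition, and the remainders gain order in at least one factor at each step, forcing convergence to the residual ideal). Since residual operators are compact on $H^{s_1,s_2}$ by Theorem \ref{Thm:Sobolev} applied with arbitrarily negative orders plus Rellich, $Q$ is a Fredholm inverse of $P$.

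For the converse, suppose $P$ is Fredholm on $H^{s_1,s_2}$ but, say, $\sigma_1(P)(x_1,\xi_1)$ fails to be invertible at some $(x_1, \xi_1) \in \A_1^{\ast}\setminus\{0\}$. One then produces a contradiction by exhibiting either an infinite-dimensional approximate kernel or cokernel. The cleanest route is via the $C^{\ast}$-algebraic picture set up in the introduction: after reducing to multiorder $(0,0)$ by composing with $\otimes$-elliptic order-shifting operators (which are invertible modulo compacts once we know the $(0,0)$ case), the image of $P$ in $\overline{\Psi^{0,0}}/\K \cong \tSigma$ must be invertible, because Fredholmness of $P$ is equivalent to invertibility of its class modulo $\K$ by Atkinson's theorem. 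But $\tSigma$ is built precisely from the three symbols $(\sigma_1, \sigma_2, \sigma_{m_1,m_2})$ subject to the compatibility \eqref{comp}, and invertibility in $\tSigma$ unwinds — using that the relevant $C^{\ast}$-algebras of symbols detect pointwise invertibility of elliptic families on the closed manifolds $M_1, M_2$ — to pointwise invertibility of each of $\sigma_1(P), \sigma_2(P), \sigma_{m_1,m_2}(P)$. Hence $P$ is $\otimes$-elliptic.

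The main obstacle is the parametrix construction in Step one and its patching: unlike the classical single-order calculus, the operator-valued symbols $\sigma_1(P)$ live over $\A_1^{\ast}\setminus\{0\}$ with values in a whole pseudodifferential algebra on $M_2$, and one must invert them smoothly and with control of behavior as $|\xi_1| \to 0$ and $|\xi_1| \to \infty$ simultaneously, keeping the inverse inside the quantizable symbol class $S^{m_1,m_2}$ rather than merely a symbol of functional-analytic operators. This is where homogeneity and the compatibility relation \eqref{comp} are essential: they guarantee that the invertibility defect of $\sigma_1(P)$ is itself controlled by $\sigma_{m_1,m_2}(P)$, so the two operator-valued inversions and the scalar inversion are not independent but feed into a single coherent parametrix. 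Once that is in place, closure of the $\otimes$-calculus under composition and the compactness of the residual ideal make the rest routine, and we may simply cite \cite{rodino}, \cite{bohlen} for the detailed symbolic estimates.
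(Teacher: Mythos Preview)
The paper does not supply its own proof of this theorem; it states the result as an auxiliary fact imported from \cite{rodino} and \cite{bohlen}. Your proposal is in line with what one finds in those references: the forward direction via a parametrix construction in the bigraded calculus is exactly Rodino's argument, and the converse via Atkinson's theorem and the short exact sequence \eqref{SES} is the $C^{\ast}$-algebraic formulation worked out in \cite{bohlen}. So there is nothing to compare against in the present paper, and your outline is the standard one.

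One small correction to your forward direction: by Definition~\ref{Def:telliptic} the operator-valued symbol $\sigma_1(P)(x_1,\xi_1) \in \Psi^{m_2}(M_2)$ is required to be genuinely \emph{invertible}, not merely elliptic. You phrase Step one as inverting $\sigma_1(P)$ ``modulo $C^{\infty}(\A_1^{\ast}\setminus\{0\}, \Psi^{-\infty}(M_2))$'', which is the weaker conclusion one would draw from ellipticity alone. In fact the hypothesis gives you an honest pointwise inverse, and the real work is to show that this inverse varies smoothly in $(x_1,\xi_1)$ and has the correct homogeneity so that it lies in the quantizable symbol class; the compatibility relation \eqref{comp} ensures this. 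With that adjustment the remainder after quantizing is already of order $(-1,0)$ rather than $(-1,m_2)$, and the iterative improvement to reach $\Psi^{-\infty,-\infty}$ proceeds as you describe. Also note a minor circularity risk in your converse: you invoke the isomorphism $\overline{\Psi^{0,0}}/\K \cong \tSigma$, but the proof of that exactness in \cite{bohlen} already uses the parametrix construction from the forward direction, so your two halves are not logically independent --- which is fine, since the forward direction is established first.
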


Throughout this work we only use the direction: $\otimes$-ellipticity implies the boundedness property on Sobolev spaces. Denote by $\K := \K(L^2(M_1 \times M_2, E, F))$ the compact operators. We have the short exact sequence (cf. \cite{bohlen}):
\begin{align}
\xymatrix{
& \K \ar@{>->}[r] & \overline{\Psi^{0,0}}(M_1 \times M_2, E, F) \ar@{->>}[r]^-{\sigma_1 \oplus \sigma_2} & \Sigma^{\otimes}. 
} \label{SES}
\end{align} 

Where by $\overline{\Psi^{0,0}}$ we denote the completion of the algebra of $0$-order operators in $\L(L^2(M_1 \times M_2, E, F))$ operator norm and $\Sigma^{\otimes} = \overline{\Psi^{0,0}} / \K$ is the completed symbol space. Denote by 
\begin{align*}
& \sigma^{M_1} \colon \overline{\Psi^{0}}(M_1, E_1, F_1) \to C(S^{\ast} M_1), \\
& \sigma^{M_2} \colon \overline{\Psi^{0}}(M_2, E_2, F_2) \to C(S^{\ast} M_2)
\end{align*}
the corresponding continuous extensions of the principal symbol homomorphisms for the standard pseudodifferential calculi on $M_1$ and $M_2$ respectively. By an abuse of notation we continue to denote the continuous extensions to the $C^{\ast}$-closures of the operator algebras of the operator valued symbol mappings by $\sigma_1(P)$ and $\sigma_2(P)$. These continuous extensions identify with $\widetilde{\sigma}_1 := \sigma_1(P) = \id_{C(S^{\ast} M_2)} \otimes \sigma^{M_1}$ and $\widetilde{\sigma}_2 := \sigma_2(P) = \sigma^{M_2} \otimes \id_{C(S^{\ast} M_1)}$. The algebra $\Sigma^{\otimes}$ is $\ast$-isomorphic to the pullback \cite{bohlen}[Thm. 3.5]:
\[
\xymatrix{
\Sigma^{\otimes} \ar[d]_{\pi_2} \ar[r]^-{\pi_1} & C(S^{\ast} M_1, \overline{\Psi^{0}}(M_2, E_2, F_2)) \ar[d]_{\widetilde{\sigma}_2} \\
C(S^{\ast} M_2, \overline{\Psi^{0}}(M_1, E_1, F_1)) \ar[r]^{\widetilde{\sigma}_1} & C(S^{\ast} M_1 \times S^{\ast} M_2)
}
\]

\section{The $\otimes$-index}

\label{section:poincare}

\subsection{$\otimes$-Elliptic operators}

In this section we introduce the group of stable homotopy classes of $\otimes$-elliptic operators. In addition, we recall some properties of the index mapping defined on this group.

Let $P \colon H^{s_1, s_2}(M_1 \times M_2, E, F) \to H^{s_1 - m_1, s_2 - m_2}(M_1 \times M_2, E, F)$ acting on the vector bundles $E$ and $F$ be given as an $\otimes$-elliptic operator. Then we note that the Fredholm index is independent of $(s_1, s_2) \in \Rr^2$, cf. \cite{rodino}. Therefore it suffices to consider operators acting as
\[
P \colon H^{m_1, m_2}(M_1 \times M_2, E, F) \to L^2(M_1 \times M_2, E, F).
\]

In addition, by using the bounded transform, we reduce the index computation further to the order $(0, 0)$-case, cf. \cite{bohlen2, bs, rodino}.
 
\begin{Def}
Two $\otimes$ operators $P \colon H^{0,0}(M_1 \times M_2, E) \to H^{0,0}(M_1 \times M_2, F)$, $\widetilde{P} \colon H^{0,0}(M_1 \times M_2, \widetilde{E}) \to H^{0,0}(M_1 \times M_2, \widetilde{F})$ are \emph{stably homotopic} if there exists a continuous homotopy of $\otimes$-elliptic operators
\[
P \oplus 1_{E_0} \sim f^{\ast}(\widetilde{P} \oplus 1_{F_0}) e^{\ast}
\]

where $E_0, F_0 \to M_1 \times M_2$ are vector bundles and $e \colon E \oplus E_0 \to \widetilde{E} \oplus F_0, \ f \colon \widetilde{F} \oplus F_0 \to F \oplus E_0$ are vector bundle isomorphisms.
\label{Def:stablehomotopy}
\end{Def} 

\begin{Rem}
Stable homotopy is an equivalence relation on the set of all $\otimes$-ellipic operators acting on sections of vector bundles. By $\tEll(M_1 \times M_2)$ we denote the set of all $(0,0)$-order $\otimes$-elliptic operators modulo stable homotopies. This set is a group with respect to the direct sum of $\otimes$-elliptic operators. The inverse element corresponds to an an inverse modulo compact operators and the unit is the equivalence class of the identity operator. 
\label{Rem:stablehomotopy}
\end{Rem}

The Fredholm index $\ind$ yields a homomorphism of groups $\ind \colon \tEll(M_1 \times M_2) \to \Zz$, by homotopy invariance. We consider the commutative diagram whose lower part is a short exact sequence:
\begin{figure}[H]
\begin{tikzcd}[row sep=huge, column sep=huge, text height=1.5ex, text depth=0.25ex]
& C(M_1 \times M_2) \arrow{d}{\mu} \arrow{dr}{\overline{\mu}} & \\
\K \arrow{r} & \overline{\Psi^{0,0}} \arrow{r}{\sigma_1 \oplus \sigma_2} & \tSigma
\end{tikzcd} 
\end{figure}

where $\mu$ is the inclusion given by the action as multiplication operators and $\overline{\mu}$ the composition with the quotient map. 

\begin{Def}
Let $f \colon A \to B$ be a $\ast$-homomorphism for $C^{\ast}$-algebras $A$ and $B$. Then $K(f)$ are homotopy class of triples $(p, q, z)$ with $p, q$ projections in $M_{\infty}(A)$ and $z$ invertible in $M_{\infty}(B)$ such that $z f(p) z^{-1} = f(q)$. 
\label{Def:relKthy}
\end{Def}

\begin{Rem}
We have a long exact sequence:
\[
\xymatrix{
\cdots \ar[r] K_{0}(B \otimes C_0(0,1)) \ar[r] & K(f) \ar[r] & K_{0}(A) \ar[r]^{f_{\ast}} & K_{0}(B) \ar[r] & \cdots
}
\]

Given the diagram
\[
\xymatrix{
A \ar[d]_{f} \ar[r]^{\varphi} & B \ar[d]_{g} \\
\widetilde{A} \ar[r]^{\widetilde{\varphi}} & \widetilde{B}
}
\]

there is an associated group homomorphism $(f, g)_{\ast} \colon K(\varphi) \to K(\widetilde{\varphi})$. In addition, $(f, g)_{\ast}$ is an isomorphism, if $f_{\ast}, g_{\ast}$ are $K$-theory isomorphisms. This follows from the five Lemma applied to the long exact sequence. The mapping cone of $\varphi$ is defined as:
\[
\C_{\varphi} := \{(a, F) \in A \oplus C_0((0,1], B) : F(1) = \varphi(a)\}. 
\]

We always have $K(\varphi) \cong K_0(C_{\varphi})$ where $C_{\varphi}$ is the mapping cone of $\varphi$ and excision isomorphism $\ex \colon K(\varphi) \iso K_0(\ker \varphi)$, when $\varphi$ is onto. For an alternative but equivalent definition of $K(\varphi)$, via projective $C^{\ast}$-modules, see also \cite{as2011,blackadar,karoubi} and \cite{bl}[Section 2]. A notable special case is given by the relative $K$-theory $K(X, Y)$ for a CW-pair $(X, Y)$. The generators in this case take the form $(E, F, \alpha)$, where $E, F$ are vector bundles over $X$ and $\alpha \colon E_{|Y} \iso F_{|Y}$ is an isomorphism of the restricted vector bundles. The difference element $d(E, F, \alpha) \in K(X, Y)$ is constructed in \cite{palais}[II. 3] to which we refer for further details. 
\label{Rem:functoriality}
\end{Rem}

By the previous remark we have $K_{\ast}(\mu) \cong K_{\ast}(\C_{\overline{\mu}})$. Here $\C_{\overline{\mu}}$ denotes the mapping cone $C^{\ast}$-algebra of $\overline{\mu}$ given by
\[
\C_{\overline{\mu}} := \{(f, F) \in C(M_1 \times M_2) \oplus C_0((0, 1], \tSigma) : F(0) = \overline{\mu}(f)\}. 
\]

\begin{Thm}
There is an isomorphism of groups $\tEll(M_1 \times M_2) \cong K_0(\C_{\overline{\mu}})$. 
\label{Thm:diffconstr}
\end{Thm}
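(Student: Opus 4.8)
The plan is to build the difference map
$\chi \colon \tEll(M_1 \times M_2) \to K_0(\C_{\overline{\mu}})$
directly, mirroring the non-commutative Atiyah--Singer difference construction of \cite{savin}, and then to verify that it is a well-defined group isomorphism. Write $M := M_1 \times M_2$ and let $P \in \overline{\Psi^{0,0}}(M, E, F)$ be $\otimes$-elliptic. Realizing $E$ and $F$ as complemented subbundles of a trivial bundle $\Cc^N$ produces projections $e, f \in M_N(C(M))$ with $K$-theory classes $[e],[f]$, and, after extending $P$ by the identity on the complements, a full symbol $\sigma(P) := (\sigma_1 \oplus \sigma_2)(P) \in M_N(\tSigma)$. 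By Theorem \ref{Thm:telliptic} together with the exactness of \eqref{SES}, $\otimes$-ellipticity of $P$ is equivalent to invertibility of $\sigma(P)$ in $M_N(\tSigma)$; this invertible element intertwines $\overline{\mu}(e)$ with $\overline{\mu}(f)$, so that the triple $(e, f, \sigma(P))$ is a cycle for the relative group $K_0(\overline{\mu}) \cong K_0(\C_{\overline{\mu}})$ in the sense of Remark \ref{Rem:relKthy}. We set $\chi([P])$ to be its class.

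Well-definedness requires three checks. First, the class is independent of the chosen lift $P$ of the symbol: two operators in $\overline{\Psi^{0,0}}(M, E, F)$ with the same full symbol differ by an element of $\K$ by \eqref{SES}, hence define the same cycle. Second, it is independent of $N$ and of the embeddings $E, F \hookrightarrow \Cc^N$: enlarging $N$ augments $(e,f,\sigma(P))$ by a degenerate cycle of the form $(g,g,\id)$, and two embeddings of a given bundle are homotopic, so neither operation changes the class in $K_0(\C_{\overline{\mu}})$. Third, homotopy invariance: a homotopy $(P_t)_{t\in[0,1]}$ through $\otimes$-elliptic operators of order $(0,0)$ yields a norm-continuous path of invertible symbols $\sigma(P_t)$, hence a homotopy of cycles; combined with stabilization invariance this shows $\chi$ factors through stable homotopy, so $\chi \colon \tEll(M) \to K_0(\C_{\overline{\mu}})$ is well defined. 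That $\chi$ is a homomorphism is then immediate, since direct sum of operators corresponds to the block-diagonal sum of cycles, which represents the sum in $K_0(\C_{\overline{\mu}})$, and the class of a trivial operator (e.g. an identity) is $0$.

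It remains to prove bijectivity. For surjectivity, represent a given class in $K_0(\C_{\overline{\mu}})$, after stabilization and a homotopy, by a cycle $(e, f, v)$ in which $e, f \in M_N(C(M))$ are the projections of genuine subbundles $E, F \subset \Cc^N$ and $v \in M_N(\tSigma)$ is invertible with $v\,\overline{\mu}(e) = \overline{\mu}(f)\,v$; the surjectivity of $\sigma_1 \oplus \sigma_2$ in \eqref{SES} lifts $v$ to an operator in $M_N(\overline{\Psi^{0,0}})$, and compressing appropriately yields $P \in \overline{\Psi^{0,0}}(M, E, F)$ whose full symbol represents the cycle $(e,f,v)$; by the equivalence used above $P$ is $\otimes$-elliptic, and $\chi([P])$ is the chosen class. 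For injectivity, suppose $\chi([P]) = 0$. Then, after stabilization, the cycle $(e, f, \sigma(P))$ is homotopic to a degenerate one; realizing this homotopy at the level of symbols gives a norm-continuous path of invertible elements of $M_{N'}(\tSigma)$ from $\sigma(P)\oplus\id$ to a trivial symbol, compatible with a path of projections. Lifting this path through the symbol map produces a homotopy through $\otimes$-elliptic operators connecting $P$ (stabilized) to a trivial operator, so $[P]=0$ in $\tEll(M)$. I expect this homotopy-lifting step — choosing the lifts of a path of invertible symbols so as to stay inside the calculus and to respect the bundle data, uniformly in the parameter — to be the principal obstacle; it is handled by combining the openness of the invertibles of $\tSigma$, the surjectivity of $\sigma_1\oplus\sigma_2$ applied over subintervals, and a partition-of-unity patching argument, while the density of $\Psi^{0,0}$ in $\overline{\Psi^{0,0}}$ and the definition of $\tEll$ via stable homotopy absorb the passage to the completion.
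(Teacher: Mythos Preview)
The paper does not supply its own proof of this theorem: in Remark~\ref{Rem:relKthy} it simply records that the result is proved in \cite{savin} via a non-commutative Atiyah--Singer difference construction, and then states the theorem without further argument. Your proposal reproduces precisely that difference-construction approach, so it is the same route the paper points to; the one place that deserves more care is the injectivity step, where ``degenerate'' in $K_0(\overline{\mu})$ means the intertwiner lifts through $\overline{\mu}$ (not merely that it is homotopic to the identity), and the homotopy of cycles may move the projections $e,f$ as well---both features have to be tracked when you lift back to a stable homotopy of $\otimes$-elliptic operators.
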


\begin{proof}
This follows from \cite{savin}[Thm. 4] applied in our case to the data $A_0 := C(M_1 \times M_2), \ A := \overline{\Psi^{0,0}}(M_1 \times M_2) \subset \L(L^2(M_1 \times M_2))$ and mapping cone $\C_{\overline{\mu}}$. 
\end{proof}

In addition we have the so-called mapping cone exact sequence:
\begin{align}
\xymatrix{
S \tSigma \ar@{>->}[r]^{j} & \C_{\overline{\mu}} \ar@{->>}[r]^-{q} & C(M_1 \times M_2) 
} \label{mcses}
\end{align}

Apply the six term exact sequence in $K$-theory to \eqref{SES} and \eqref{mcses} which furnish appropriate connecting maps / boundary maps in $K$-theory. We denote by $\partial_{nc} \colon K_1(\tSigma) \to K_0(\K) \cong \Zz$ the boundary mapping obtained from \eqref{SES}. We have for any $\otimes$-elliptic $P \in \Psi^{0,0}$
\begin{align}
\ind(P) = \partial_{nc}[(\sigma_1 \oplus \sigma_2)(P)]_1
\end{align}

cf. \cite{bohlen}. For the mapping cone exact sequence we get the six term exact sequence:
\[
\xymatrix{
K_1(\tSigma) \ar[r]^{j_{\ast}} & K_0(\C_{\overline{\mu}}) \ar[r]^-{q_{\ast}} & K^0(M_1 \times M_2) \ar[d]_{\epsilon} \\
K^1(M_1 \times M_2) \ar[u]^{\delta} & \ar[l]_-{q_{\ast}} K_1(\C_{\overline{\mu}}) & \ar[l]_{j_{\ast}} K_0(\tSigma). 
}
\]

In particular we have the excision map $j_{\ast} \colon K_{\ast}(\tSigma) \to K_{\ast + 1}(\C_{\overline{\mu}}) \cong K_{\ast + 1}(\overline{\mu})$.

\subsection{Double deformation groupoid}

Given the smooth manifold $M$, A. Connes (cf. \cite[p.100]{connes}) defined the so-called \emph{tangent groupoid}:
\[
\G_{M}^t := TM \times \{0\} \cup M \times M \times (0,1] \rightrightarrows M \times [0,1]
\]

with structure:
\[
\xymatrix{
(\G_M^t)^{(2)} \ar@{->>}[r]^-{m} & \G_M^t \ar@{>->>}[r]^{i} & \G_M^t \ar@{->>}@< 2pt>[r]^-{r,s} \ar@{->>}@<-2pt>[r]^{} & M \times [0,1] \ar@{>->}[r]^-{u} & \G_M^t.
}
\]

We denote by $\pi \colon TM \to M$ the canonical projection and define the structural maps as follows:
\begin{align*}
& s(x, y, t) = y, \ t > 0, \ s(v, 0) = \pi(v), \ t= 0, \\
& r(x, y, t) = x, \ t > 0, \ r(v, 0) = \pi(v), \ t = 0, \\
& m((x, y, t), (y, z, t)) = (x, y, t) \cdot (y, z, t) = (x, z, t), \ t > 0, \\
&(v, 0) \cdot (w, 0) = (v + w, 0), \ t = 0, \\
& i(x, y, t) = (y, x, t), \ t > 0, \ i(v, 0) = -v.
\end{align*}

\begin{Not}
We fix the notation $\G_{M_1}^t \rightrightarrows M_1 \times [0,1]_t$ the tangent groupoid over $M_1$, indexed by $t \in [0,1]$ and $\G_{M_2}^u \rightrightarrows M_2 \times [0,1]_u$ the tangent groupoid over $M_2$, indexed by $u \in [0,1]$. 
\label{Not:Tau}
\end{Not}

\begin{Def}
Given two smooth manifolds $M_1, M_2$ we define the following model groupoid 
\[
\Tau := \G_{M_1}^t \times \G_{M_2}^u \rightrightarrows M_1 \times M_2 \times [0,1] \times [0,1].
\]

with structure:
\[
\xymatrix{
\Tau^{(2)} \ar@{->>}[r]^-{m^{\otimes}} & \Tau \ar@{>->>}[r]^-{i^{\otimes}} & \Tau \ar@{->>}@< 2pt>[r]^-{r^{\otimes},s^{\otimes}} \ar@{->>}@<-2pt>[r]^{} & M_1 \times M_2 \times [0,1]^2 \ar@{>->}[r]^-{u^{\otimes}} & \Tau.
}
\]
\label{Def:Tau}
\end{Def}

\begin{Rem}
In the course of this work we will make use of $C^{\ast}$-algebras of Lie groupoids. For the precise definitions we refer to \cite{lr}[section 8]. Note that Haar systems on Lie groupoids always exist and up to $\ast$-isomorphism there is one $C^{\ast}$- and $C_r^{\ast}$-completion of a given Lie groupoid \cite{lr}[Prop. 3.4]. We recall that a Lie groupoid $\G \rightrightarrows \Gop$ is called \emph{metrically amenable} if the canonical surjective $\ast$-homomorphism $C^{\ast}(\G) \to C_r^{\ast}(\G)$, mapping the full $C^{\ast}$-algebra to the reduced $C^{\ast}$-algebra of $\G$, induced by Definition 2.4. in \cite{cnq}, is also injective. We can show that $C^{\ast}(\Tau)$ is a nuclear $C^{\ast}$-algebra, i.e. there is a unique $C^{\ast}$ tensor product $C^{\ast}(\Tau) \otimes B$ for every $C^{\ast}$-algebra $B$ \cite[15.8.1]{blackadar}. Note that the nuclearity property carries over to all $C^{\ast}$-algebras considered in this section, using the two-out-of-three property \cite[IV.3.1.15]{blackadar}, applied to the short exact sequences induced by the principal symbol mappings.
\label{Rem:Tau}
\end{Rem}

\begin{Thm}
The space $\Tau$ is a $C^{\infty}$-manifold with corners and $\Tau \rightrightarrows M_1 \times M_2 \times [0,1]^2$ has the structure of a metrically amenable Lie groupoid. The $C^{\ast}$-algebra $C^{\ast}(\Tau)$ is nuclear. 
\label{Thm:doubleTG}
\end{Thm}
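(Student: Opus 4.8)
The plan is to build $\Tau$ as an honest manifold with corners by exhibiting it locally as a product of two copies of Connes' tangent groupoid, each of which is already known to be a manifold with boundary, and then to verify the groupoid axioms componentwise. First I would recall that, for a single closed manifold $M$, the tangent groupoid $\G_M^t$ carries a natural smooth structure: near $t = 0$ one uses the ``zoom'' or ``blow-up'' chart, identifying a neighbourhood of $TM \times \{0\}$ with an open subset of $TM \times [0,1)$ via $(x, v, t) \mapsto (x, \exp_x(tv), t)$ for $t>0$ (with an exponential map for some chosen connection), while for $t>0$ one simply has the open piece $M\times M\times(0,1]$. Taking the product of the atlas for $\G_{M_1}^t$ (in the variable $t$) with the atlas for $\G_{M_2}^u$ (in the variable $u$) gives an atlas for $\Tau$ whose model spaces are products $(\text{open in }\Rr^{n_1}\times[0,1))\times(\text{open in }\Rr^{n_2}\times[0,1))$, i.e. open subsets of $\Rr^{n_1+n_2}\times[0,1)^2$; these are exactly the charts of a manifold with corners, with the corner stratum sitting over $\{t=u=0\}$. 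The transition maps are products of the (smooth) transition maps of the factor atlases, hence smooth.

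Next I would check that the structural maps $r^{\otimes}, s^{\otimes}, m^{\otimes}, i^{\otimes}, u^{\otimes}$ of Definition \ref{Def:Tau}, which by construction are just the pairs $(r^{t}, r^{u})$, $(s^{t}, s^{u})$, etc., of the corresponding maps for $\G_{M_1}^t$ and $\G_{M_2}^u$, are smooth in these product charts — this is immediate since each factor map is smooth in the factor charts and smoothness of a map into a product is smoothness of each component. One also needs that $s^{\otimes}$ (equivalently $r^{\otimes}$) is a submersion, so that $\Tau^{(2)}$ is a manifold with corners and the groupoid is a Lie groupoid in the with-corners sense; this follows because $s^{t}$ and $s^{u}$ are submersions and a product of submersions is a submersion, and the fibred product $\Tau^{(2)} = \G_{M_1}^t{}^{(2)} \times \G_{M_2}^u{}^{(2)}$ is cut out transversally. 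The Lie algebroid is then $\A(\Tau) \cong \A(\G_{M_1}^t) \times \A(\G_{M_2}^u)$, consistent with the remark on $\A(\G_1\times\G_2)$ made earlier in the paper.

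For amenability I would argue that a product of amenable (Hausdorff, locally compact) groupoids is amenable: the tangent groupoid $\G_M^t$ is amenable because it is built from the pair groupoid $M\times M$ (proper, hence amenable) over $t>0$ and the bundle of abelian groups $TM$ over $t=0$ (amenable as a bundle of amenable groups), glued continuously; more structurally, $\G_M^t$ has a Haar system and its reduced and full $C^*$-algebras agree, and it is a continuous field over $[0,1]$ of amenable groupoids, hence amenable. Then $\Tau = \G_{M_1}^t \times \G_{M_2}^u$ is amenable since amenability is closed under taking products of locally compact groupoids with Haar systems (one can, e.g., produce an approximate invariant mean on $\Tau$ by tensoring those of the two factors, or invoke that a product of measurewise amenable groupoids is measurewise amenable). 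I would also note in passing that the product Haar system $\mu^{\otimes}$ introduced before Theorem \ref{Thm:Sobolev} is exactly the Haar system making $\Tau$ a Lie groupoid with Haar system, so the $C^*$-algebra $C^*(\Tau)$ referenced later is well defined.

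The main obstacle I expect is not the groupoid algebra but the corner structure: one must be careful that the product of two manifolds-with-boundary is genuinely a manifold with corners (codimension-two corner over $\{t=u=0\}$) and that all the structural maps respect the corner stratification — in particular that $m^{\otimes}$ is defined on the with-corners fibred product $\Tau^{(2)}$, which requires the two ``boundary defining functions'' $t$ and $u$ to pull back correctly under $m^t\times m^u$ (they do, since $m^t$ preserves $t$ and $m^u$ preserves $u$). A secondary subtlety is verifying smoothness of the multiplication across the corner $t=u=0$, where both factors are in their zoom charts simultaneously; but since the multiplication in each zoom chart is smooth up to $t=0$ (resp. $u=0$) by the classical computation, the product multiplication is smooth up to $t=u=0$. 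Once these points are settled, the statement follows by assembling the pieces.
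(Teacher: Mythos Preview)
Your proposal is correct and follows the same approach as the paper: the paper's proof is a single sentence noting that the product structure yields a Lie groupoid whose components are amenable, with a citation to \cite[Proposition 5.3.4]{dr} for the stability of amenability. You have simply unpacked this sketch in detail --- the product-of-atlases argument for the manifold-with-corners structure, the componentwise verification of the structural maps and submersion property, and the amenability of each tangent-groupoid factor together with closure of amenability under products --- so there is no genuine divergence in strategy.
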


\begin{proof}
We endow the tangent groupoids with a $C^{\infty}$-structure using a transport of structure argument as described in \cite[p. 103]{connes}. Thereby we obtain Lie groupoids $\G_{M_j}^t$, $j = 1,2$, where the arrow spaces $\G_{M_j}$ are $C^{\infty}$- manifolds with boundary $TM_j = \partial \G_{M_j}^t$ and interior $M_j \times M_j \times (0,1]$. The arrow space $\Tau$ is therefore a $C^{\infty}$-manifold with corners. As a product groupoid of Lie groupoids, $\Tau$ is a Lie groupoid. The groupoid $\Tau$ is measurewise amenable in the sense of  \cite[Proposition 5.3.4]{dr}, since it consists of components that are measurewise amenable. Therefore, the $C^{\ast}$-algebra $C^{\ast}(\Tau)$ is nuclear and metrically amenable \cite[Proposition 2.6]{cnq}, i.e. the full $C^{\ast}$-algebra $C^{\ast}(\Tau)$ is canonically isomorphic to the reduced $C^{\ast}$-algebra $C_r^{\ast}(\Tau)$.
\end{proof}

\begin{Rem}
Given a symbol $a \in S^{m_1, m_2}(T^{\ast}(M_1 \times M_2))$ we define $P_a = (P_{t,u})_{(t, u) \in [0,1]^2}$ in the $\otimes$-calculus $\Psi^{m_1, m_2}(\Tau)$ on the double deformation $\Tau$. In terms of the quantization for pseudodifferential operators of this type as defined by \eqref{quant}, we can write for $t > 0, \ u > 0$ and cutoff function $\chi \in C_c^{\infty}(\Tau)$:
\begin{multline}
(P_{t,u} f)(x_1, x_2, t, u) = \\ 
\int_{M_1} \int_{T_{x_1}^{\ast} M_1} \int_{M_2} \int_{T_{x_2}^{\ast} M_2} e^{\frac{\scal{\exp_{x_1}^{-1}(z_1)}{\xi_1}}{t} + \frac{\scal{\exp_{x_2}^{-1}(z_2)}{\xi_2}}{u}} a(x_1, \xi_1, x_2, \xi_2) \\
\times f(z_1, z_2, x_1, x_2) \chi(z_1, z_2, x_1, x_2) \,\frac{dz_1 dz_2 d\xi_1 d\xi_2}{t^{n_1} u^{n_2}},
\end{multline}

as well as
\begin{multline}
(P_{0,0} f)(x_1, v_1, x_2, v_2; 0, 0) \\ 
= \int_{T_{(x_1, x_2)}(M_1 \times M_2)} \int_{T_{(x_1, x_2)}^{\ast}(M_1 \times M_2)} e^{(v_1 - w_1) \xi_1 + (v_2 - w_2) \xi_2} a(x_1, \xi_1, x_2, \xi_2) \\
\times f(x_1, w_1, x_2, w_2) \chi(x_1, w_1, x_2, w_2) \,dw_1 dw_2 d \xi_1 d\xi_2. 
\end{multline}

Similarly, one can obtain the expressions for $t > 0, \ u = 0$ and $t = 0, \ u > 0$. The full symbol on the calculus $\Psi^{0,0}(\Tau)$ is given as a twice parametrized smooth family of symbol mappings $(\sigma_1^{t,u}, \sigma_2^{t,u}, \R^{t,u})_{(t,u) \in [0,1]^2}$. 
%
\label{Rem:PsDo}
\end{Rem}

\begin{Lem}
Denote by $e_{00} \colon C^{\ast}(\Tau) \to C^{\ast}(T(M_1 \times M_2))$ the restriction homomorphism to the $(0,0)$-component subgroupoid. We have the short exact sequence:
\begin{align}
\xymatrix{
\ker e_{00} \ar@{>->}[r] & C^{\ast}(\Tau) \ar@{->>}[r]^-{e_{00}} & C^{\ast}(T(M_1 \times M_2)) 
} \label{Tses}
\end{align}

The kernel $\ker e_{00}$ is a contractible $C^{\ast}$-algebra.
\label{Lem:SES}
\end{Lem}

\begin{proof}
Note that $\ker e_{00} = C^{\ast}(\Tau_{|[0,1]^2 \setminus \{(0,0)\}})$. We write $A_{u,t}$ for the $C^{\ast}$-algebra $\ker e_{00}$, i.e. it depends on the parameters $(t, u) \in [0,1]^2 \setminus \{(0,0)\}$. Let $f_h \colon A_{u,t} \to A_{u,t}$ be defined as
\[
f_h(g)(\gamma, t, \eta, u) = g(\gamma, t \cdot h, \eta, u \cdot h), \ g \in A_{u,t}.
\]

Then $f_h$ is a $\ast$-homomorphism such that $f_{0} = 0, \ f_1 = \id$, hence $f_h$ is a contracting homotopy. 
\end{proof}

\subsection{Construction of $\Theta$}

We apply Theorem \ref{Thm:diffconstr} to obtain the group isomorphism 
\[
\tEll(M_1 \times M_2) \cong K_0(\C_{\overline{\mu}}).
\] 
Then we proceed to the construction of the Poincar\'e duality type homomorphism 
\[
\Theta \colon K_0(\overline{\mu}) \to K_0(C^{\ast}(\Tau)).
\]

The construction is based on functoriality in relative $K$-theories in conjunction with the structure of the deformation groupoid $C^{\ast}$-algebra $C^{\ast}(\Tau)$. We then show that for $\otimes$-operators this map recovers the Fredholm index, when we restrict to $\otimes$-operators that take the form of external tensor products.

\begin{Def}
We define the \emph{analytic index mapping}:
\begin{align}
\ind_a^{\otimes} &:= (e_{11})_{\ast} \circ (e_{00})_{\ast}^{-1} \colon K_c^0(T^{\ast}(M_1 \times M_2)) \to K_0(\K) \cong \Zz. \label{tinda} 
\end{align}
\label{Def:tinda}
\end{Def}

Assume $E = F$, which is possible by e.g. considering a bundle containing $E \oplus F$. We study the completions of the $\otimes$-pseudodifferential calculus $\Psi^{0,0}(\Tau, (r^{\otimes})^{\ast} E)$ in the $\L(L^2(\Tau, (r^{\otimes})^{\ast} E)$-operator norm according to Theorem \ref{Thm:Sobolev}. The corresponding completions are taken with respect to the appropriate Sobolev norms in the special cases, i.e. we form completions of the calculus on $M_1 \times M_2$ and the calculus on the tangent bundle as a product Lie groupoid $T(M_1 \times M_2) \cong TM_1 \times TM_2$. We fix the notation $\Sigma^{\bullet}$ for the various quotients that are induced by the continuous extensions of the full symbol mapping:
\begin{align*}
\Sigma^c &:= \overline{\Psi^{0,0}}(T(M_1 \times M_2), \pi^{\ast} E) / \ker \sigmafull^{T(M_1 \times M_2)}, \\
\Sigma^{nc} &:= \overline{\Psi^{0,0}}(\Tau, (r^{\otimes})^{\ast} E) / \ker \sigmafull^{\Tau}, \\
\Sigma^{\otimes} &:= \overline{\Psi^{0,0}}(M_1 \times M_2, E) / \ker (\sigma_1 \oplus \sigma_2).
\end{align*}

For brevity we set $\sigma^c := \sigmafull^{T(M_1 \times M_2)}, \sigma^{nc} := \sigmafull^{\Tau}$. We have the following diagram involving the inclusions as actions of continuous functions by multiplication operators:
\begin{figure}[H]
\begin{equation}
\begin{tikzcd}[row sep=huge, column sep=huge, text height=1.5ex, text depth=0.25ex]
C(M_1 \times M_2) \arrow{r}{\mu} & \overline{\Psi^{0,0}}(M_1 \times M_2, E)  \arrow{r}{\sigma_1 \oplus \sigma_2} & \tSigma \\
C(M_1 \times M_2 \times [0,1]^2) \arrow{u}{e_{11}} \arrow{d}{e_{00}} \arrow{r}{\mu^{nc}} & \overline{\Psi^{0,0}}(\Tau, (r^{\otimes})^{\ast} E) \arrow[xshift=.10ex]{u}{e_{11}} \arrow{d}{e_{00}} \arrow{r}{\sigma^{nc}} & \Sigma^{nc} \arrow{u}{e_{11}} \arrow{d}{e_{00}} \\
C(M_1 \times M_2) \arrow{r}{\mu^{c}} & \overline{\Psi^{0,0}}(T(M_1 \times M_2), \pi^{\ast} E) \arrow{r}{\sigma^c} & \Sigma^c
\end{tikzcd}
\label{Dia}
\end{equation}
\end{figure}
In the notation of Remark \ref{Rem:PsDo} $\sigma_1 = \sigma_{1}^{1,1}, \ \sigma_2 = \sigma_2^{1,1}, \ \R^{1,1} = \sigma_{00}$. 
We also set $\overline{\mu} := (\sigma_1 \oplus \sigma_2) \circ \mu, \ \overline{\mu}^{nc} := \sigma^{nc} \circ \mu^{nc}$ and $\overline{\mu}^{c} := \sigma^c \circ \mu^c$ for the induced actions on the quotients. 
We have the following commuting diagrams:
\begin{align}
\xymatrix{
C(M_1 \times M_2) \ar[d]_{\mu} \ar[r]^-{\overline{\mu}} & \tSigma \ar@{==}[d] \\
\overline{\Psi^{0,0}} \ar@{->>}[r]^{\sigma_1 \oplus \sigma_2} & \tSigma
} \label{1}
\end{align}

as well as
\begin{align}
\xymatrix{
C(M_1 \times M_2 \times [0,1]^2) \ar[d]_{e_{11}} \ar[r]^-{\overline{\mu}^{nc}} & \Sigma^{nc} \ar[d]_{e_{11}} \\
C(M_1 \times M_2) \ar[r]^{\overline{\mu}} & \tSigma
}
\label{2}
\end{align}

and
\begin{align}
\xymatrix{
C(M_1 \times M_2 \times [0,1]^2) \ar[d]_{e_{00}} \ar[r]^-{\overline{\mu}^{nc}} & \Sigma^{nc} \ar[d]_{e_{00}} \\
C(M_1 \times M_2) \ar[r]^{\overline{\mu}^{c}} & \Sigma^c.
}
\label{3}
\end{align}

There are induced homomorphisms of groups $\mu_{\ast} \colon K_{\ast}(\overline{\mu}) \to K_{\ast}(\sigma_1 \oplus \sigma_2)$ and likewise $\mu_{\ast}^{nc}$ and $\mu_{\ast}^{c}$. Let us outline the construction of $\mu_{\ast}$ using diagram \eqref{1} for example, see also \cite{pz}[section 10.2]. By surjectivity of $\sigma_1 \oplus \sigma_2$, the inverse of the excision map furnishes the first isomorphism in the following chain of isomorphisms:
\[
K_{\ast}(\sigma_1 \oplus \sigma_2) \cong K_{\ast}(\ker(\sigma_1 \oplus \sigma_2)) \cong K_{\ast}(\K) \cong \Zz. 
\]

On the level of generators, the inverse of the excision map is described as follows. Any class in $K_{\ast}(\sigma_1 \oplus \sigma_2)$ is represented by a relative cycle $[p, q, z]$ with respect to the quotient map $\widetilde{q} \colon \K^{+} \to \Cc$ from the unitalization of $\K$ to $\Cc$. Then $[p, q, z]$ is sent to $[p] - [q]$, which by definition of $K_{\ast}(\widetilde{q})$ is a class in $K_{\ast}(\K)$. Represent a class in $K_{\ast}(\mu)$ via the full symbol of a $\otimes$-elliptic operator $P$, as follows
\begin{align}
[P]_{rel} &= \left[\begin{pmatrix} 1_{E} & 0 \\ 0 & 0 \end{pmatrix}, \begin{pmatrix} 0 & 0 \\ 0 & 1_{F} \end{pmatrix}, \begin{pmatrix} 0 & (\sigma_1 \oplus \sigma_2)(P)^{-1} \\ (\sigma_1 \oplus \sigma_2)(P) & 0 \end{pmatrix}\right]. \label{x}
\end{align}

This class is the pair of $C(M_1 \times M_2)$-modules which are isomorphic via $(\sigma_1 \oplus \sigma_2)(P)$, if viewed as $\Sigma$-modules through $\overline{\mu}$. The invertible lift of 
\[
\begin{pmatrix} 0 & (\sigma_1 \oplus \sigma_2)(P)^{-1} \\
(\sigma_1 \oplus \sigma_2)(P) & 0 \end{pmatrix}
\]

is given by
\[
T = \begin{pmatrix} 1_E - QP & -(1_E - QP) Q + Q \\
P & 1_F - PQ \end{pmatrix}. 
\]

With $Q$ being a parametrix (up to compact errors) of $P$. By the Kuiper's theorem we can take a path $T_t$ through the invertible elements from $1_E \oplus 1_F$ to $T$. This furnishes a homotopy from the relative cycle $(P)_{rel}$ generating the class $[P]_{rel}$ as specified above, to a relative cycle with respect to $\widetilde{q}$, whose image in $K_{\ast}(\K)$ is given by 
\begin{align}
& T \begin{pmatrix} 1_{E} & 0 \\ 0 & 0 \end{pmatrix} T^{-1} - \begin{pmatrix} 0 & 0 \\ 0 & 1_{F} \end{pmatrix} \in M_2(\K). \label{D}
\end{align}

This defines the image of $[P]_{rel}$ under $\mu_{\ast}$. Note in particular that the boundary map $\partial_{nc} \colon K_1(\tSigma) \to K_0(\K) \cong \Zz$ from the six term exact sequence of the short exact sequence \eqref{SES}, sends $[(\sigma_1 \oplus \sigma_2)(P)]_1$ to the class \eqref{D}. 

The vertical maps in diagram \eqref{2} are surjective and the kernels of $e_{11}$ are functions $f$ on $[0,1]_t \times [0,1]_u$ respectively such that $f(1,1) = 0$. These kernels are thereby isomorphic to mapping cones. Hence, they induce isomorphisms $(e_{11})_{\ast}$ in $K$-theory. By Remark \ref{Rem:functoriality}, the induced map in relative $K$-theory $(e_{11}, e_{11})_{\ast} =: (e_{11})_{\ast} \colon K_{\ast}(\overline{\mu}^{nc}) \to K_{\ast}(\overline{\mu})$ is an isomorphism. We denote its inverse by $(\varphi^{0,0})_{\ast} := (e_{11})_{\ast}^{-1} \colon K_{\ast}(\overline{\mu}) \to K_{\ast}(\overline{\mu}^{nc})$.   

Altogether, by an application of functoriality from Remark \ref{Rem:functoriality} to \eqref{Dia}, we obtain the following diagram:
\begin{figure}[H]
\begin{equation}
\begin{tikzcd}[row sep=huge, column sep=huge, text height=1.5ex, text depth=0.25ex]
K_{\ast}(\overline{\mu}^c) \arrow{d}{\mu_{\ast}^c} & \arrow{l}{(e_{00})_{\ast}} K_{\ast}(\overline{\mu}^{nc}) \arrow{d}{\mu_{\ast}^{nc}}  & \arrow{l}{(\varphi^{0,0})_{\ast}} K_{\ast}(\overline{\mu}) \arrow{d}{\mu_{\ast}} \\
K_{\ast}(\sigma^c) \arrow{d}{ex^c} & \arrow{l}{(e_{00})_{\ast}} K_{\ast}(\sigma^{nc}) \arrow{d}{ex^{nc}} & \arrow{l}{(\varphi^{0,0})_{\ast}} K_{\ast}(\sigma) \arrow{d}{ex} \\
K_c^{\ast}(T^{\ast}(M_1 \times M_2)) & \arrow{l}{(e_{00})_{\ast}} K_{\ast}(C^{\ast}(\Tau))  & \arrow{l}{(\varphi^{0,0})_{\ast}} K_{\ast}(\K) \cong \Zz
\end{tikzcd}
\label{Diagram}
\end{equation}
\end{figure}

The homomorphism $\Theta$ is defined as the composition of the homomorphisms going from the upper right corner to the lower left corner of the diagram, i.e.:
\begin{align}
\Theta := \ex^c \circ \mu_{\ast}^c \circ (e_{00})_{\ast} \circ (\varphi^{0,0})_{\ast}. \label{Theta}
\end{align}

\begin{Ex}
The external tensor product of elliptic operators is defined as 
\[
P = P_1 \sharp P_2 = \begin{pmatrix} P_1 \otimes I & -I \otimes P_2^{\ast} \\ I \otimes P_2 & P_1^{\ast} \otimes I \end{pmatrix}, 
\]
In this case the principal symbols take the form 
\begin{align*}
\sigma_1(P)(x_1, \xi_1) &= \begin{pmatrix} \sigma(P_1)(x_1, \xi_1) & -P_2^{\ast} \\
P_2 & \overline{\sigma(P_1)(x_1, \xi_1)} \end{pmatrix}, \\
\sigma_2(P)(x_2, \xi_2) &= \begin{pmatrix} P_1 & -\overline{\sigma(P_2)(x_2, \xi_2)} \\
\sigma(P_2)(x_2, \xi_2) & P_1^{\ast} \end{pmatrix}.
\end{align*}

Systems of the form $P_1 \sharp P_2$ were studied by Atiyah and Singer with $P_1$ and $P_2$ classical pseudodifferential operators on $M_1$ and $M_2$ respectively and of order $> 0$. The external tensor product $P_1 \sharp P_2$ results in an operator which is not classical in the standard pseudodifferential calculus on $M_1 \times M_2$. It is contained in the calculus of classical $\otimes$-operators, \cite{rodino}. If $P_1$, $P_2$ are elliptic of order $m_1 > 0,\ m_2 > 0$, then the standard symbol of $P_1 \sharp P_2$ is a matrix of $2 \times 2$ continuous functions. Then $P_1 \sharp P_2$ can be shown to be Fredholm, via a uniform approximation by classical elliptic pseudodifferential operators, cf. Atiyah-Singer \cite{as}[pp. 512-515] and \cite{hoermander}[Thm. 19.2.7]. In general, by an application of the $\otimes$ calculus the external tensor product $P_1 \sharp P_2$ for order $\leq 0$ operators is a Fredholm operator over $L^2(M_1 \times M_2, E, F)$ with index $\ind(P_1 \sharp P_2) = \ind(P_1) \cdot \ind(P_2)$. 
\label{Rem:lifting}
\end{Ex}

\begin{Thm}
\[
\xymatrix{
& \Zz \\
\Ell^{\otimes}(M_1 \times M_2) \ar[dr]_{\nu} \ar[ur]^{\ind} \ar[r]^-{\chi} & K_0(C^{\ast}(\Tau)) \ar[u]_{\ind_a^{\otimes}} \\
& K_0(\overline{\mu}) \ar[u]^{\Theta}
}
\]

There is a canonical homomorphism $\Theta$ and the non-commutative difference construction group isomorphism $\nu$, where we set $\chi := \Theta \circ \nu$ such that: Let $[P] \in \Ell^{\otimes}(M_1 \times M_2)$ be the class of a $\otimes$-elliptic operator $P$. If $P$ is an external tensor product (up to equivalence in relative $K$-theory), then $\ind_a^{\otimes}$ recovers the Fredholm index of $P$, i.e.:
\[
\ind[P] = (\ind_a^{\otimes} \circ \chi)[P]. 
\]

\label{Thm:PD}
\end{Thm}

\begin{proof}
Let $P_{11} = P \in \Psi^{0,0}(M_1 \times M_2, E)$ be $\otimes$-elliptic and of the form $P_1 \sharp P_2$ for elliptic pseudodifferential operators $P_i \in \Psi^{0}(M_i, E_i), i=1,2$. Let $P_a = (P_{t,u})_{0 \leq t,u \leq 1}$ the family of $\otimes$ operators in the calculus of $\Tau$, depending on the symbol $a$. Fix $(a, a_1, a_2)$ such that $((e_{01})_{\ast} \circ (\varphi^{0,0})_{\ast} \circ \ex \circ \mu_{\ast})[P] = [a_1], \ ((e_{10})_{\ast} \circ (\varphi^{0,0})_{\ast} \circ \ex \circ \mu_{\ast})[P] = [a_2]$, $[a] := (\Theta \circ \nu)[P] \in KK(\Cc, C^{\ast}(T(M_1 \times M_2))$. 
Write $\ind_a^{\otimes} = - \otimes [e_{00}]^{-1} \otimes [e_{11}]$. 

A homotopy construction from \cite{palais}[pp. 220 - 223] yields 
\begin{align}
& [a] = (\Theta \circ v)[P] = [a_1] \otimes_{\Cc} [a_2] \in KK(\Cc, C^{\ast}(T(M_1 \times M_2))). \label{equality}
\end{align}

To make the paper more self-contained we include the details. The construction cited above takes place in (commutative) relative $K$-theory (see also Remark \ref{Rem:functoriality}) and the final equality in $KK$-theory follows by an application of the Atiyah-Singer difference isomorphism between relative $K$-theories and the compactly supported $K$-theory of the tangent bundles \cite{as}. We set throughout $M := M_1 \times M_2$. We introduce the notation $\nu_i$ to denote the classical Atiyah-Singer difference map, associating a relative cycle generating a class in $K(B(M_i), S(M_i))$ to a given pseudodifferential operator on $M_i$, as well as $\nu^c$ will denote the map associating a relative cycle generating a class in $K(B(M), S(M))$ to a pseudodifferential operator on $M$. By an abuse of notation we denote by $\pi$ respectively the projections $TM \to M, \ TM_i \to M_i$ or projections of ball bundles, specified wherever it is not clear from context which is meant. We can assume for the following argument without loss of generality that $P_i$ are elliptic of order $> 0$. Indeed, we can choose $\widetilde{P}_i$ of order $> 0$ such that $\widetilde{D} = \widetilde{P}_1 \sharp \widetilde{P}_2$ and the index of $\widetilde{D}$ agrees with the index of $P_1 \sharp P_2$, by an application of the bounded transform $\widetilde{D} (1 + \widetilde{D}^2)^{-\frac{1}{2}}$ as a $\otimes$ operator. Furthermore, we view the class $[a]$ as the class of the principal symbol of a standard elliptic (H\"ormander) pseudodifferential operator $D \colon C^{\infty}(M, E) \to C^{\infty}(M, F)$ of order $> 0$ that is given by $P_1 \sharp P_2$. This is possible by virtue of the facts stated in Remark \ref{Rem:lifting}, since even though by itself $P_1 \sharp P_2$ may not be a standard pseudodifferential operator, we may choose a $D$ that is elliptic without changing the index, by virtue of \cite{as}[pp. 512-515] and \cite{hoermander}[Thm. 19.2.7]. 

\begin{figure}[H]
\begin{tikzcd}[row sep=huge, column sep=huge, text height=1.5ex, text depth=0.25ex]
& K_0(T^{\ast} M_1) \arrow[r, "\simeq"', "\nu_1"] & K(BM_1, SM_1) \\
K_{0}(C^{\ast}(\Tau)) \arrow[dr, swap, "(e_{01})_{\ast}"] \arrow{ur}{(e_{10})_{\ast}} \arrow[r, "\simeq"', "(e_{00})_{\ast}"] & K(T^{\ast} M) \arrow[r, "\simeq"', "\nu^c"] & K(BM, SM) \\
& K_0(T^{\ast} M_2) \arrow[r, "\simeq"', "\nu_1"] & K(BM_2, SM_2)
\end{tikzcd}
\end{figure}

The (classical) difference construction furnishes the difference class:
\[
\nu^c(D) = d(\pi^{\ast} E, \pi^{\ast} F, \sigma(D)) \in K(B(M), S(M)).
\]

The symbol $\sigma(P_i) \colon \pi^{\ast} E_i \to \pi^{\ast} F_i$ extends by continuity to $BM_i$. Next we construct a homotopy, to show that $\nu(D)$ is homotopic to the product $\nu_1(D_1) \times \nu_2(D_2)$. The bundles $E, F \to M$ and the principal symbol yields a bundle map
\begin{align}
& \sigma(P_1 \sharp P_2) \colon \pi^{\ast} E \to \pi^{\ast} F \label{symboliso}
\end{align}

with $\pi \colon B(M_1) \times B(M_2) \to M$ defined over $B(M_1) \times B(M_2)$. The map \eqref{symboliso} is an isomorphism off the zero cross-sections of $B(M_1) \times B(M_2)$. By definition, 
\begin{align}
& \nu^c(P_1 \sharp P_2) = d(\pi^{\ast} E, \pi^{\ast} F, \sigma(P_1 \sharp P_2)_{|S(M)} \label{diffconstruction}
\end{align}

where $\pi \colon B(M) \to M$. This furnishes an element of $K(B(M), S(M))$. Note that
\[
\partial(B(M_1) \times B(M_2)) = B(M_1) \times S(M_2) \cup S(M_1) \times B(M_2). 
\]

Then by \cite{palais}[II, Thm. 8] together with 
\begin{align}
& \nu_i(P_i) = d(\pi^{\ast} E_i, \pi^{\ast} F_i, \sigma(P_i)_{|S(M_i)} \label{diffconstructionPi}
\end{align}

shows that
\begin{align}
\nu_1(P_1) \times \nu_1(P_2) = d(\pi^{\ast} E, \pi^{\ast} F, \sigma(P_1 \sharp P_2)_{|\partial(B(M_1) \times B(M_2)}. \label{diffproduct}
\end{align}

Both sides of \eqref{diffconstructionPi} lie in $K(B(M_1) \times B(M_2), \partial (B(M_1) \times B(M_2))$. Let 
\[
E(M) := \{(x_1, x_2) \in B(M_1) \times B(M_2) : \|(x_1, x_2)\| \geq 1\}. 
\]

Then $E(M)$ is the part of $B(M_1) \times B(M_2)$ lying between $S(M)$ and $\partial (B(M_1) \times B(M_2))$. Note that $B(M_1 \times M_2) \subseteq B(M_1) \times B(M_2)$. We have the following homotopy commutative diagram of homotopy equivalences:
\[
\xymatrix{
(B(M_1) \times B(M_2), \partial(B(M_1) \times B(M_2)) \ar[d]_{r} \ar[rd]^{i_1} & \\
(B(M), S(M)) \ar[r]^{i_2} & (B(M_1) \times B(M_2), E(M))
}
\]

The homotopy $r$ is defined 
\begin{align*}
& r \colon (B(M_1) \times B(M_2), \partial(B(M_1) \times B(M_2)) \\
&= (B(M_1) \times B(M_2), B(M_1) \times S(M_2) \cup S(M_1) \times B(M_2)) \to (B(M), S(M))
\end{align*}

to equal the identity on $B(M_1) \times B(M_2)$ and the radial retraction onto $S(M)$ on $B(M)$. In addition, $i_1, i_2$ denote inclusions. Since $E(M)$ consists of non-zero vectors, the map $\sigma(P_1 \sharp P_2)_{|E(M)}$ is an isomorphism. Define $\nu^{\ast} \in K(B(M_1) \times B(M_2), E(M))$ by $\nu^{\ast} = d(\pi^{\ast} E, \pi^{\ast} F, \sigma(P_1 \sharp P_2)_{|E(M)}$. Next, we make use of the operation $^{!}$ on $K$-theory, induced by the pullback of vector bundles (cf. \cite{palais}[ch.2]). Then, via the homotopy diagram
\[
r^{!} i_2^{!}(\nu^{\ast}) = i_1^{!}(\nu^{\ast}). 
\]

By \eqref{diffproduct}, 
\[
i_1^{!}(\nu^{\ast}) = \nu_1(P_1) \times \nu_2(P_2)
\]

and by \eqref{diffconstruction} 
\[
i_2^{!}(\nu^{\ast}) = \nu^c(P_1 \sharp P_2).
\]

Thus $r^{!} \nu^c(P_1 \sharp P_2) = \nu_1(P_1) \times \nu_2(P_2)$. Via the isomorphisms induced by the Atiyah-Singer difference construction this furnishes the equality \eqref{equality}.

By virtue of this identity, a computation in $KK$-theory yields
\begin{align*}
&\ind_a^{\otimes}((\Theta \circ \nu)[P]) = [a] \otimes [e_{00}]^{-1} \circ [e_{11}] \\
&= [a_1] \otimes_{\Cc} [a_2] \otimes [e_{00}]^{-1} \otimes [e_{11}] = [P_{11}] \in KK(\Cc, \K) \cong K_0(\K). 
\end{align*}

Here $[P_{11}]$ coincides with $\ind(P_{11})$ under $K_0(\K) \cong \Zz$. Now $P_{11}$ has scalar symbol equal to the leading part of $a$ and every class in $K_c^0(T^{\ast}(M_1 \times M_2))$ can be obtained from such a symbol.
\end{proof}

\section{Topological index theorem}

We define next the $\otimes$-topological index mapping $\ind_{\mathrm{top}}^{\otimes}$ and prove the index theorem, i.e. $\ind_a^{\otimes} = \ind_{\mathrm{top}}^{\otimes}$. 
Denote by $\Tau^{t,u} := \Tau$ the double tangent groupoid defined in the previous section. Fix the embeddings
$i_1 \colon M_1 \hookrightarrow \Rr^{N_1}, \ i_2 \colon M_2 \hookrightarrow \Rr^{N_2}$. We introduce the 
homomorphism $h^{t,u} \colon (\Tau^{t,u}, \cdot) \to (\Rr^{N_1 + N_2}, +)$ which is defined as follows
\begin{align*}
& h^{t,u}(x, y, t, w, 0) = \left(\frac{i_1(x) - i_1(y)}{t}, (di_2)_{\pi_2(w)}(w)\right), \ t > 0, u = 0, \\
& h^{t,u}(v, 0, \widetilde{x}, \widetilde{y}, u) = \left((di_1)_{\pi(v)}(v), \frac{i_2(\widetilde{x}) - i_2(\widetilde{y})}{u}\right), \ t=0, u >0, \\
& h^{t,u}(v,0,w,0) =((di_1)_{\pi(v)}(v), (di_2)_{\pi(w)}(w)), \ t = 0, u=0, \\
& h^{t,u}(x,y,t,\widetilde{x}, \widetilde{y}, u) = \left(\frac{i_1(x) - i_1(y)}{t}, \frac{i_2(\widetilde{x}) - i_2(\widetilde{y})}{u}\right), \ t > 0, u > 0.
\end{align*}

Using $h$ we obtain a right action of $\Tau$ on $\Rr^{N_1 + N_2}$ and the action groupoid 
$\Tau_h := \Rr^{N_1 + N_2} \rtimes_h \Tau \rightrightarrows \Tau^{(0)} \times \Rr^{N_1 + N_2}$ with the following structure:
\begin{align*}
& s_h(v, \gamma) = (v + h(\gamma), s^{\otimes}(\gamma)), \\
& r_h(v, \gamma) = (v, r^{\otimes}(\gamma)), \\
& (v, \gamma) \cdot (v + h(\gamma), \eta) = (v, \gamma \cdot \eta), \\
& u_h(v, x) = (v, x), \ \text{note that} \ h(x) = 0 \ \text{for} \ x \in \Tau^{(0)}, \\
& (v, \gamma)^{-1} = (v + h(x), \gamma^{-1}), \ \text{note that} \ h(\gamma) + h(\gamma^{-1}) = 0. 
\end{align*}

\begin{Lem}
\emph{i)} The action of $\Tau^{t,u}$ on $\Rr^{N_1 + N_2}$, induced by $h$, is free and proper.

\emph{ii)} The orbit space $\Tau^{(0)} \times \Rr^{N_1 + N_2} / \sim$ is homeomorphic to the locally compact space:
\[
\B^{t,u} = ((0,1]_t \times \Rr^{N_1} \cup \N_1 \times \{0\}_t) \times ((0,1]_u \times \Rr^{N_2} \cup \N_2 \times \{0\}_u).
\]

Where by $\N_1 = \N(i_1), \ \N_2 = \N(i_2)$ we denote the normal bundles to $i_1$ and $i_2$ respectively. 
\label{Lem:Tauh}
\end{Lem}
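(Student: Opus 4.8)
The plan is to exploit the product structure $\Tau = \G_{M_1}^t \times \G_{M_2}^u$ and reduce both assertions to the corresponding statement for a single tangent groupoid, which is the classical input underlying Connes' deformation-groupoid proof of the index theorem (cf. \cite{connes}, and the groupoid treatments in \cite{dr}, \cite{ds}). First I would record that, under the identifications $\Tau^{t,u} \cong \G_{M_1}^t \times \G_{M_2}^u$ and $\Rr^{N_1 + N_2} \cong \Rr^{N_1} \times \Rr^{N_2}$, the homomorphism $h^{t,u}$ is simply the product $h^{t,u}(\gamma_1, \gamma_2) = (h_1(\gamma_1), h_2(\gamma_2))$ of the two single-factor homomorphisms $h_j$ attached to the embeddings $i_j \colon M_j \hookrightarrow \Rr^{N_j}$; this is immediate by inspecting the four cases in the definition of $h^{t,u}$. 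It follows that the action groupoid splits as a product of Lie groupoids
\[
\Tau_h = \Rr^{N_1+N_2} \rtimes_h \Tau \;\cong\; \bigl(\Rr^{N_1} \rtimes_{h_1} \G_{M_1}^t\bigr) \times \bigl(\Rr^{N_2} \rtimes_{h_2} \G_{M_2}^u\bigr),
\]
with unit space $\Tau^{(0)} \times \Rr^{N_1+N_2} \cong \bigl(M_1 \times [0,1]_t \times \Rr^{N_1}\bigr) \times \bigl(M_2 \times [0,1]_u \times \Rr^{N_2}\bigr)$ after permuting coordinates, and with anchor $(r_h, s_h)$ the product of the two factor anchors.

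For part i) I would then invoke the single-factor statement: the action of the tangent groupoid of $M_j$ on $\Rr^{N_j}$ via $h_j$ is free and proper. Freeness is checked stratum by stratum: for $t > 0$, an arrow $(x,y,t)$ fixing a point $(x,v)$ has equal range and source, which forces $y = x$, so it is a unit; for $t = 0$, a tangent vector $v \in T_x M_j$ fixing $(x,w)$ satisfies $(di_j)_x(v) = 0$, hence $v = 0$ since $i_j$ is an immersion. Properness of the factor anchor is the classical properness of the tangent-groupoid action and reduces, using compactness of $M_j$, to closedness of the relevant preimages. Since a product of free actions is free and a product of proper maps between locally compact Hausdorff spaces is proper, the action of $\Tau_h$ on $\Rr^{N_1+N_2}$ is free and proper.

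For part ii), each single-factor action being free and proper, its orbit map is open; a product of open continuous surjections is again a quotient map, and its fibres are exactly the product orbits, so $\bigl(\Tau^{(0)} \times \Rr^{N_1+N_2}\bigr)/\!\sim$ is the product of the two single-factor orbit spaces. It then remains to identify each factor. For the $j$-th factor: when $t > 0$ the orbit of $(x,v)$ equals $\{(y, v + (i_j(x) - i_j(y))/t) : y \in M_j\}$, with complete invariant $v + i_j(x)/t \in \Rr^{N_j}$, so over $(0,1]_t$ the orbit space is $(0,1]_t \times \Rr^{N_j}$; at $t = 0$ the orbit of $(x,w)$ equals $\{x\} \times \bigl(w + (di_j)_x T_x M_j\bigr)$, so the orbit space there is the normal bundle $\N(i_j) = \N_j$. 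Hence the $j$-th factor is $(0,1]_t \times \Rr^{N_j} \cup \N_j \times \{0\}_t$ carrying its natural deformation-to-the-normal-cone topology, and the product of the two factors is exactly $\B^{t,u}$.

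I expect the only non-formal ingredient to be the properness of the single tangent groupoid action across the boundary stratum $\{t = 0\}$ — equivalently, at the corner $\{t = u = 0\}$ of $\Tau$ — where one must control the rescaled differences $(i_j(x) - i_j(y))/t$ as $t \to 0$ so that the anchor map has compact preimages of compact sets, and where the explicit form of $h_j$ and of the embedding $i_j$ really enter. Everything else — the product decomposition, freeness, and the identification of the orbit space with $\B^{t,u}$ — is routine bookkeeping with the product structure.
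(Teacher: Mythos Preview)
Your proposal is correct and follows essentially the same strategy as the paper: record the product decomposition $\Tau_h \cong (\Rr^{N_1}\rtimes_{h_1}\G_{M_1}^t)\times(\Rr^{N_2}\rtimes_{h_2}\G_{M_2}^u)$, then reduce both freeness/properness and the orbit-space identification to the single tangent-groupoid case treated in \cite{connes} and \cite{clm}. Your stratum-by-stratum freeness check and explicit orbit computation just spell out what the paper abbreviates as ``injectivity of $h^t,h^u$'' and ``the computation in \cite{connes}''.
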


\begin{proof}
\emph{i)} To check that $\Tau_h$ is a free and proper groupoid we need to verify that $\Tau_h$ has isotropy groups which are
quasi-compact and that $(r_h, s_h) \colon \Tau_h \rightrightarrows \Tau^{(0)} \times \Rr^{N_1 + N_2}$ is a closed map, by virtue of \cite{tu}[Prop. 2.14]. Writing $h = h^t \times h^u$, where $h^t, h^u$ are the corresponding homomorphisms $\G_{M_1}^t \to \Rr^{N_1}, \ \G_{M_2}^u \to \Rr^{N_2}$, we see that
\begin{align}
& \Rr^{N_1 + N_2} \rtimes_h \Tau \cong (\Rr^{N_1} \rtimes_{h^t} \G_{M_1}^t) \times (\Rr^{N_2} \rtimes_{h^u} \G_{M_2}^u) \label{Tauh}
\end{align}

as Lie groupoids. By the injectivity of $h^t$ and $h^u$ on the $r$- and $s$-fibers the isotropy groups of $\Tau_h$ consist of only single points, therefore
they are quasi-compact. Secondly, $h^t$ and $h^u$ induce closed range / source maps, cf. \cite{clm}. Thus, since $h = h^t \times h^u$ 
and by \eqref{Tauh} $r_h \oplus s_h$ is closed. 
\emph{ii)} We endow $\B^{t,u}$ with the canonical locally compact topology. 

The computation in \cite{connes} furnishes the orbit spaces $(\G_{M_1}^t)^{(0)} \times \Rr^{N_1} / \sim \ \cong \B^t$
with regard to the $h^t$-induced action and $(\G_{M_2}^u)^{(0)} \times \Rr^{N_2} / \sim \ \cong \B^u$ with regard to the $h^u$-induced action.
This yields the assertion.
\end{proof}

\begin{Rem}
Let us consider the orbit space for the tangent groupoid $\G_{M_1 \times M_2}^t \rightrightarrows M_1 \times M_2 \times [0,1]$ 
with regard to the corresponding homomorphism $h_{M_1 \times M_2}^t$, denoted by $\B_{M_1 \times M_2}^t$.
Note the $K$-equivalences $\B^{t,u} \sim_{K} \N_1 \times \N_2 \sim_{K} \B_{M_1 \times M_2}^t$. 
However, $\B_{M_1 \times M_2}^t$ is not the orbit space of the $\Tau$-action induced by $h^{t,u}$, since the underlying
equivalence relation is different. 
\label{Rem:Tauh}
\end{Rem}

We recall next the construction of the Connes-Thom map from \cite{clm}[Def. 2.9, Prop. 2.10]. For sake of completeness we include full details below.

\begin{Thm}
Given the data $(\Tau, h)$ as specified and assuming that $N_1 + N_2$ is even, then there is a map in $K$-theory: 
$\CT_h^{t,u} \colon K_{\ast}(C^{\ast}(\Tau)) \to K_{\ast}(C^{\ast}(\Tau_h))$ such that

\emph{i)} $\CT_h^{t,u}$ is natural.

\emph{ii)} $\CT_h^{t=0,u=0}$ yields the Thom-isomorphism $\tau \colon K_c^0(T(M_1 \times M_2)) \iso K_c^0(\N_1 \times \N_2)$.

\emph{iii)} The morphism $\CT_h$ coincides with the usual Connes-Thom isomorphism. 
\label{Thm:ENN}
\end{Thm}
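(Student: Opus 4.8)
The plan is to construct the map $\CT_h^{t,u}$ as the deformation index (wrong-way / shriek) map associated to the $h$-action, following the strategy Connes used for a single tangent groupoid, and then to verify the two asserted properties by specializing the parameters. More precisely, I would first observe that by Lemma \ref{Lem:Tauh}(i) the action groupoid $\Tau_h = \Rr^{N_1 + N_2} \rtimes_h \Tau$ is free and proper, so by the Morita theory of action groupoids $C^{\ast}(\Tau_h)$ is Morita equivalent to $C_0$ of the orbit space $\B^{t,u}$ described in Lemma \ref{Lem:Tauh}(ii). Independently, the inclusion of $\Tau$ as the zero section $v = 0$ of $\Tau_h$ — equivalently, the canonical homomorphism $j \colon \Tau \hookrightarrow \Tau_h$, $\gamma \mapsto (\gamma, 0)$ is \emph{not} a groupoid homomorphism, so instead I would use the standard device: form the open saturated subset corresponding to a tubular neighbourhood, or (cleaner) invoke the Connes–Thom / Fourier-type isomorphism $C^{\ast}(\Rr^{N} \rtimes_h \G) \simeq_{KK} C^{\ast}(\G)$ when $h$ is \emph{trivial}, and then deform $h$ to $0$ through the parameter family. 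The map $\CT_h^{t,u}$ is then the composite of $(\id_{\Tau})_{\ast}$ with this family of Connes–Thom isomorphisms; concretely it is $[h]_! := $ the element of $KK(C^{\ast}(\Tau), C^{\ast}(\Tau_h))$ given by the Kasparov module built from the representation of $C^{\ast}(\Tau)$ on $L^2(\Rr^{N_1+N_2}) \otimes C^{\ast}(\Tau)$ twisted by the Clifford/Dirac operator along $\Rr^{N_1+N_2}$ (this is where the parity hypothesis $N_1 + N_2$ even enters, guaranteeing an ungraded spinor bundle).

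For step (i), naturality: since $h = h^t \times h^u$ factors as a product of the two single-manifold homomorphisms by \eqref{Tauh}, and each single-factor construction $\Rr^{N_j} \rtimes_{h^j} \G_{M_j}$ is the one appearing in Connes' tangent-groupoid index proof (and in \cite{clm}), the map $\CT_h^{t,u}$ is the exterior Kasparov product $\CT_{h^t}^{t} \otimes \CT_{h^u}^{u}$ of the two single-factor deformation maps. Naturality with respect to restriction homomorphisms $e_{t,u}$ in the parameters $(t,u)$, and with respect to the horizontal symbol maps, then follows because Kasparov products commute with the pushforward/restriction maps and because the Connes–Thom module is functorial in the coefficient algebra; I would phrase this as commutativity of the square relating $\CT_h^{t,u}$ on $\Tau$ and on each of its parameter-restricted subgroupoids ($TM_1 \times TM_2$ at $(0,0)$, pair groupoids at $(1,1)$, and the mixed corners), which is immediate from the product formula.

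For step (ii), evaluation at $t = u = 0$: restricting $\Tau$ to the corner $\{t=0, u=0\}$ gives the bundle groupoid $T(M_1 \times M_2) = TM_1 \times TM_2$, and $h^{0,0}(v, 0, w, 0) = ((di_1)(v), (di_2)(w))$ is exactly the fibrewise differential of the embedding $i_1 \times i_2 \colon M_1 \times M_2 \hookrightarrow \Rr^{N_1 + N_2}$. The corresponding action groupoid $\Rr^{N_1+N_2} \rtimes T(M_1\times M_2)$ has orbit space the total space of the normal bundle $\N_1 \times \N_2$, and the deformation element $[h^{0,0}]_!$ is, under the Fourier isomorphisms $C^{\ast}(TM_j) \cong C_0(T^{\ast}M_j)$, precisely the Thom class of $\N_1 \oplus \N_2$ regarded as a complex (or spin$^c$) vector bundle over $T^{\ast}(M_1\times M_2)$ via the symplectic/almost-complex structure — so cap product with it is the Thom isomorphism $\tau$. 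This is the same identification carried out in \cite{connes} for a single factor, and again the product structure \eqref{Tauh} reduces the two-factor statement to the exterior product of two single-factor Thom isomorphisms, which is the Thom isomorphism for the direct sum bundle.

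The main obstacle I expect is not any single step but the careful bookkeeping of gradings and orientations: one must fix, once and for all, spin$^c$-structures on the normal bundles $\N_1, \N_2$ (compatibly with the chosen embeddings $i_1, i_2$) and on the $\Rr^{N_j}$ fibres of the action groupoids, check that the parity assumption $N_1 + N_2$ even makes the relevant Clifford algebra ungraded so that $\CT_h^{t,u}$ lands in $K_{\ast}$ rather than in a shifted or graded group, and verify that the product decomposition of $h$ is compatible with these choices so that the exterior-product formula $\CT_h = \CT_{h^t} \otimes \CT_{h^u}$ holds on the nose. Once that is set up, naturality and the $(0,0)$-evaluation are formal consequences of the functoriality of the Connes–Thom isomorphism and the multiplicativity of the Thom class.
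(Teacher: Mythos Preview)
Your proposal is correct and shares the paper's core idea --- deforming the homomorphism $h$ to the trivial action and composing the resulting deformation map with Bott periodicity --- but the execution differs in two notable ways. First, where you sketch the deformation abstractly (``deform $h$ to $0$ through the parameter family'') and then package $\CT_h$ as an explicit Kasparov/Dirac module, the paper realizes the deformation concretely by introducing a second pair of parameters: it defines $H \colon \Tau \times [0,1]^2 \to \Rr^{N_1+N_2}$ by $H(\gamma_1,\gamma_2,t',u') = (t' h_1(\gamma_1), u' h_2(\gamma_2))$, forms the action groupoid $\Tau_H$, and extracts $\CT_h = \D_h \circ B$ from the short exact sequence with contractible kernel obtained by restricting to $(t',u') = (0,0)$ versus $(1,1)$; naturality in $(t,u)$ is then immediate from this groupoid picture rather than from your exterior-product factorization $\CT_{h^t} \otimes \CT_{h^u}$. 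Second, for part \emph{ii)} the paper does not compute with the Thom class and spin$^c$ structures as you propose; instead it observes that $C^\ast(\Tau_h) \cong C^\ast(\Tau) \rtimes_\alpha \Rr^{N_1+N_2}$ as a crossed product and then invokes the Elliott--Natsume--Nest uniqueness theorem \cite{enn} to identify $\CT_h^{t=0,u=0}$ with the Thom isomorphism. Your route is more explicit and self-contained (and your emphasis on the product decomposition \eqref{Tauh} is a clean way to reduce to the single-factor case treated in \cite{connes} and \cite{clm}), while the paper's route avoids the grading/orientation bookkeeping you flag as the main obstacle by outsourcing it to the ENN axiomatics.
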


\begin{proof}
First let us describe the construction of the homomorphism $\CT_h^{t,u}$. To this end define a homomorphism 
$H \colon \Tau \times [0,1]^2 \to \Rr^{N_1 + N_2}$ by setting
\[
H(\gamma_1, \gamma_2, t, u) = (t h_1(\gamma_1), u h_2(\gamma_2)),
\]

as well as the groupoid
\[
\Tau_H := \Rr^{N_1 + N_2} \rtimes_H (\Tau \times [0,1]^2) \rightrightarrows \Rr^{N_1 + N_2} \times \Tau^{(0)}.
\]
This is the semi-direct product groupoid of the action induced by the homomorphism $H$. We set
\[
(\Tau_H)^{t,u} := \Tau_{H|_{M_1 \times M_2 \times \{t\} \times \{u\} \times \Rr^{N_1 + N_2}}}.
\]

Hence 
\[
(\Tau_H)^{0,0} = \Tau \times \Rr^{N_1 + N_2}, \ (\Tau_H)^{1,1} = \Tau_h, \ (\Tau_H)_{|(0,1] \times (0,1]} \cong \Tau_h \times (0,1] \times (0,1],
\]
so that 
\[
C^{\ast}((\Tau_H)_{(0,1] \times (0,1]})) \cong C^{\ast}(\Tau_h \times (0,1]^2).
\]
We obtain the short exact sequence:
\[
\xymatrix{
C^{\ast}(\Tau_h \times (0,1]^2) \ar@{>->}[r] & C^{\ast}(\Tau_H) \ar@{->>}[r]^-{e_{00}} & C^{\ast}(\Tau \times \Rr^{N_1 + N_2}).
}
\]

Since the kernel is contractible, we have the deformation index map 
\[
\D_h \colon K_{\ast}(C^{\ast}(\Tau \times \Rr^{N_1 + N_2})) \to K_{\ast}(C^{\ast}(\Tau_h)).
\]

The natural map $\Tau_H \to [0,1]^2$ gives rise to a continuous field of groupoids over $[0,1]^2$ and since $\Tau$ is amenable by Theorem \ref{Thm:doubleTG}, we get that $C^{\ast}(\Tau_H)$ is the space of continuous sections of a continuous field of $C^{\ast}$-algebras. The deformation index, as specified above, coincides with the morphism in \cite{enn}[Thm. 3.1]. 

Furthermore, since $N_1 + N_2$ is even, denote the Bott isomorphism by 
\[
B \colon K_{\ast}(C^{\ast}(\Tau)) \iso K_{\ast}(C^{\ast}(\Tau \times \Rr^{N_1 + N_2})).
\]

Setting $\CT_h := \D_h \circ B$ we have constructed the homomorphism $\CT_h$. 
By fixing a Haar system $(\mu_x^{t,u})_{(x,t,u) \in M_1 \times M_2 \times [0,1]^2}$ on the groupoid $\Tau$ we obtain an induced Haar system $(\mu_{z}^h)_{z \in (\Tau_h)^{(0)}}$ on $\Tau_h$. The additive group $\Rr^{N_1 + N_2}$ acts on 
$C^{\ast}(\Tau)$ by automorphisms via the formula $\alpha_v(f)(\gamma) = e^{i \scal{v}{h(\gamma)}} f(\gamma)$ for $f \in C_c^{\infty}(\Tau)$, see \cite{connes}[Prop. II.5.7] for the details. In case $N_1 + N_2$ is even, we have the Connes-Thom isomorphism in $K$-theory \cite{connes}[II.C] 
\[
C^{\ast}(\Tau_h) \cong \Rr^{N_1 + N_2} \rtimes_{\alpha} C^{\ast}(\Tau).
\] 
Altogether, we obtain that the Connes-Thom map takes the form:
\[
\CT_h \colon K_{\ast}(C^{\ast}(\Tau)) \to K_{\ast}(C^{\ast}(\Rr^{N_1 + N_2} \rtimes_h \Tau)) \cong \Rr^{N_1 + N_2} \rtimes_{\alpha} K_{\ast}(C^{\ast}(\Tau)).
\]
 
Via Lemma \ref{Lem:Tauh} we have $\Tau_h \sim_{\M} \B^{t,u}$, hence
\[
\CT_h^{t=0, u=0} \colon K_c^{\ast}(T(M_1 \times M_2)) \iso K_c^{\ast}(\N).
\]

By virtue of \cite{enn}[Thm. 5.1] $\CT_h$ as constructed, coincides with the usual Connes-Thom isomorphism. 
\end{proof}

\begin{Rem}
Connes defines in \cite[II.10]{connes} the geometric $K$-homology $K_{\ast}^{geo}(\G)$ for a proper Lie groupoid $\G$. Denote by
$\C_{\G}$ the category of proper right $\G$-spaces and homotopy classes of $\G$-equivariant maps. Given $Z \in \C_{\G}$, we denote
by $q \colon Z \to \Gop$ the anchor map of the action. We define $T^q Z := \ker dq$, the vertical tangent bundle.
The definition of the geometric $K$-homology relies on the following generators and relations:
\begin{itemize}
\item $(Z, x)$ where $Z \in \C_{\G}$, $x \in K_{\ast}(C^{\ast}(T^q Z \rtimes \G))$. 

\item $(Z_1, x_1) \sim (Z_2, x_2) :\Leftrightarrow$ there is a $Z \in \C_{\G}$ and $\G$-equivariant smooth maps 
$f_j \colon Z_j \to Z$ such that $(df_{1})_{!}(x_1) = (df_{2})_{!}(x_2)$. The pushforward is defined via $E$-theory, cf.
\cite{connes}[II.10.$\alpha$] for the details. 
\end{itemize}

The geometric assembly map $\mu^{geo} \colon K_{\ast}^{geo}(\G) \to K_{\ast}(C^{\ast}(\G))$ is defined as 
$\mu^{geo}([Z, x]_{\sim}) = (\pi_Z)_{!}(x)$, where $\pi_Z \colon Z \to \mathrm{E\G}$ is the natural projection map 
to the classifying space, i.e. final object $\mathrm{E\G} \in \C_{\G}$. The groupoid $\G$ has the \emph{geometric Baum-Connes property}
if $\mu^{geo}$ is an isomorphism. Recall the following result due to Connes: If $Z \in \C_{\G}$ is a final object, then $K_{\ast}^{geo}(\G) = K_{\ast}(C^{\ast}(T^q Z \rtimes \G))$. 
We apply this to our situation: Given $(\G, h)$ an amenable Lie groupoid with free and proper action of $\G$ on $\Rr^N$ induced 
by $h$. Then set $\G_h := \Rr^{N} \rtimes_h \G \rightrightarrows \Gop \times \Rr^{N}$ and denote by $\B_h$ the orbit space. Consider now $\G$ to be either the tangent groupoid $\G_M^t$ or the double deformation groupoid $\Tau$ with $h$ being the appropriate homomorphism $h^t$ according to \cite{connes}[p.105] or $h^{t,u}$ as defined at the beginning of this section. By \cite{connes}[10.$\alpha$, Prop. 5] we have $K_{\ast}^{geo}(\G) = K_c^0(\B_h)$.
We obtain the diagram:
\begin{figure}[H]
\begin{tikzcd}[row sep=huge, column sep=huge, text height=1.5ex, text depth=0.25ex]
\arrow[bend right=90]{d}{\CT_h} K_{\ast}(C^{\ast}(\G)) & \\
K_{\ast}^{geo}(\G) \cong K_{\ast}(C^{\ast}(\G_h)) \arrow{u}{\mu^{geo}} \arrow[r, "\simeq"'] & \arrow{ul} K_c^{0}(\B_h) 
\end{tikzcd} 
\end{figure}

By Theorem \ref{Thm:ENN} it follows that the double tangent groupoid $\Tau$ has the geometric Baum-Connes property. Then by the considerations of \cite{l}[p. 98] and the construction of $\CT_h$ in the proof of Theorem \ref{Thm:ENN} we obtain that $(\mu^{geo})^{-1} = \CT_h$. The computation of the index
invariants can be reduced to the computations on the orbit space $\B_h$. This is the basic idea to Connes' proof of the Atiyah-Singer 
index theorem, when $\G$ is the tangent groupoid of a compact manifold without boundary. 
\label{Rem:geomred}
\end{Rem}

\begin{Def}
The $\otimes$-topological index map is defined as the map $\ind_{\mathrm{top}}^{\otimes} \colon K_c^{0}(T^{\ast}(M_1 \times M_2)) \to \Zz$ via the
composition $\ind_{top}^{\otimes} := \beta \circ (e_{11}^{h})_{\ast} \circ (e_{00}^{h})_{\ast}^{-1} \circ \tau$, where $\tau \colon K_c^{0}(T^{\ast}(M_1 \times M_2)) \to K_c^{0}(\N_1 \times \N_2)$ 
is the Thom-isomorphism, $\beta$ the Bott periodicity isomorphism and $e_{00}^h$, $e_{11}^{h}$ denote the restrictions to the $t=0,u=0$ and $t=1,u=1$ components of $\B^{t,u}$.
\label{Def:indtop}
\end{Def}

\begin{Thm}
We have the equality of index maps $\ind_a^{\otimes} = \ind_{\mathrm{top}}^{\otimes}$.
\label{Thm:indtop}
\end{Thm}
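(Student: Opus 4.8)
The plan is to deduce the equality $\ind_a^{\otimes} = \ind_{top}^{\otimes}$ from a uniqueness argument for the index map on the double tangent groupoid, exactly as in Connes' proof of the Atiyah--Singer theorem but carried out in the two-parameter deformation. Recall that $\ind_a^{\otimes} = (e_{11})_\ast \circ (e_{00})_\ast^{-1}$ is defined via the short exact sequence \eqref{Tses}, whose kernel is contractible, so $(e_{00})_\ast$ is an isomorphism $K_\ast(C^\ast(\Tau)) \iso K_c^0(T(M_1 \times M_2))$. On the other side, Theorem \ref{Thm:ENN} produces the natural isomorphism $\CT_h \colon K_\ast(C^\ast(\Tau)) \iso K_\ast(C^\ast(\Tau_h))$ which at $(t,u)=(0,0)$ is the Thom isomorphism $\tau$, and by Lemma \ref{Lem:Tauh} the algebra $C^\ast(\Tau_h)$ is Morita equivalent to $C_0(\B^{t,u})$, with $\B^{t,u}$ carrying the two evaluation maps $e_{00}^h$ (to $\N_1 \times \N_2$, up to the Bott class) and $e_{11}^h$ (to a point, up to $\K$). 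So $\ind_{top}^{\otimes} = \beta \circ (e_{11}^h)_\ast \circ (e_{00}^h)_\ast^{-1} \circ \tau$ is, after transporting through $\CT_h$, the analogue of $\ind_a^{\otimes}$ computed on $\Tau_h$ rather than on $\Tau$.

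The key steps, in order, are as follows. First I would set up the commuting prism relating the two short exact sequences: the one coming from $e_{00}, e_{11}$ on $C^\ast(\Tau)$ and the one coming from $e_{00}^h, e_{11}^h$ on $C^\ast(\Tau_H)$ (equivalently on $\B^{t,u}$), with the vertical arrows furnished by the action homomorphism $H$ of Theorem \ref{Thm:ENN}; naturality of $\CT_h$ (part i)) says precisely that these vertical maps intertwine the horizontal restriction maps. Second, I would note that $\CT_h$ commutes with the $(1,1)$-restrictions: both $\Tau$ and $\Tau_h$ have $(1,1)$-fibre the pair groupoid of $M_1 \times M_2$ crossed (trivially in the $h$ case, since $h$ vanishes on units after restriction—more precisely $(\Tau_h)^{1,1}=\Tau_h$ but its $K$-theory is computed by Bott-composed-with-evaluation) with $\Rr^{N_1+N_2}$, and the Bott map $\beta$ in Definition \ref{Def:indtop} is exactly the correction that identifies $(e_{11}^h)_\ast$ with $(e_{11})_\ast$ under $\CT_h$. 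Third, I would invoke part ii) of Theorem \ref{Thm:ENN}: $\CT_h^{0,0} = \tau$, which identifies $(e_{00})_\ast^{-1}$ followed by $\tau$ with $(e_{00}^h)_\ast^{-1}$ up to the Bott periodicity shift. Composing the three identifications yields $\ind_{top}^{\otimes} = \ind_a^{\otimes}$.

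An alternative, and in some ways cleaner, route is via the geometric Baum--Connes picture of Remark \ref{Rem:geomred}: since $\Tau$ has the geometric Baum--Connes property, $(\mu^{geo})^{-1} = \CT_h$, and both index maps factor through the geometric $K$-homology $K_\ast^{geo}(\Tau) \cong K_\ast(C^\ast(\Tau_h)) \cong K_c^0(\B^{t,u})$; the analytic index is $\mu^{geo}$ followed by $(e_{11})_\ast \circ (e_{00})_\ast^{-1}$, while the topological index is its computation on the orbit space, and the uniqueness statement of Elliott--Natsume--Nest forces them to agree. I would present the deformation-prism argument as the main proof and mention this as a remark.

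The main obstacle I expect is the bookkeeping at the $(1,1)$-end: one must check carefully that $e_{11}^h \colon C_0(\B^{t,u}) \to C_0(\Rr^{N_1+N_2})$ (the fibre over $t=u=1$ inside $\B^{t,u}$ before collapsing the Euclidean factor) followed by Bott periodicity $\beta$ really does correspond, under the Morita equivalence $\Tau_h \sim_{\M} \B^{t,u}$ and the crossed-product identification $C^\ast(\Tau_h) \cong C^\ast(\Tau) \rtimes_\alpha \Rr^{N_1+N_2}$, to the honest restriction $e_{11} \colon C^\ast(\Tau) \to \K$ landing in $K_0(\K) \cong \Zz$. This is where the evenness hypothesis $N_1+N_2$ even and the precise normalization of the Bott element enter, and getting the signs and the direction of the Thom isomorphism consistent between the $M_1$- and $M_2$-factors requires care — but once the naturality square of $\CT_h$ is in place it is a diagram chase. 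The remaining steps are formal consequences of Theorems \ref{Thm:doubleTG}, \ref{Thm:ENN} and Lemma \ref{Lem:Tauh}.
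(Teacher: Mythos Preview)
Your proposal is correct and follows essentially the same approach as the paper: the paper's proof is precisely the commutative diagram you describe, with vertical isomorphisms $\CT_h$ (and its specializations $\CT_h^{0,0}=\tau$, $\CT_h^{1,1}$ composed with $\beta$) intertwining the restrictions $(e_{00})_\ast, (e_{11})_\ast$ on $\Tau$ with $(e_{00}^h)_\ast, (e_{11}^h)_\ast$ on $\B^{t,u}$, from which $\ind_a^{\otimes} = \beta \circ (e_{11}^h)_\ast \circ (e_{00}^h)_\ast^{-1} \circ \tau = \ind_{top}^{\otimes}$ is read off. Your flagged ``main obstacle'' at the $(1,1)$-end is exactly the content of the top square in the paper's diagram, and your alternative via Remark \ref{Rem:geomred} is also noted there.
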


\begin{proof}
By the previous discussion we have the following commuting diagram
\begin{figure}[H]
\begin{tikzcd}[row sep=huge, column sep=huge, text height=1.5ex, text depth=0.25ex]
\Zz \arrow[r, equal] & \Zz \\
K_0(\K) \arrow[u, "\M", "\simeq"'] \arrow{r}{\CT_h^{t=1, u=1}} & K_c^0(\Rr^{N_1 + N_2}) \arrow[u, "\beta", "\simeq"'] \\
K_0(C^{\ast}(\Tau^{t,u}) \arrow[d, "(e_{00})_{\ast}", "\simeq"'] \arrow{u}{(e_{11})_{\ast}} \arrow[r, "\CT_h", "\simeq"'] & K_0(C^{\ast}(\Tau_h)) \cong K_c^0(\B^{t,u}) \arrow{u}{(e_{11}^h)_{\ast}} \arrow[d, "(e_{00}^h)_{\ast}", "\simeq"'] \\
\arrow[bend left=60]{uuu}{\ind_{a}^{\otimes}} K_c^0(T^{\ast}(M_1 \times M_2)) \arrow[r, "\CT_h^{t=0,u=0}", "\simeq"'] & K_c^0(\N_1 \times \N_2).
\end{tikzcd} 
\end{figure}

A computation shows that
\begin{align*}
&\ind_{\mathrm{top}}^{\otimes} = \M \circ (\CT_h^{t=0, u=0})^{-1} \circ (e_{11}^h)_{\ast} \circ (e_{00}^h)_{\ast}^{-1} \circ \CT_h^{t=0,u=0} \\
&= \M \circ (\CT_h^{t=0,u=0})^{-1} \circ (\CT_h^{t=0,u=0}) \circ (e_{11})_{\ast} \circ \CT_h^{-1} \circ \CT_h  \\
&\circ (e_{00})_{\ast}^{-1} \circ (\CT_h^{t=0,u=0})^{-1} \circ \CT_h^{t=0,u=0} \\
&= \M \circ (e_{11})_{\ast} \circ (e_{00})_{\ast}^{-1} \\
&= \ind_a^{\otimes}.
\end{align*} 
\end{proof}

We refer to \cite{dln} for a comparison of the topological index, as defined via the tangent groupoid construction and the topological index, as originally defined by Atiyah-Singer. It can be shown that these indices in fact agree. With an additional computation one can obtain a cohomological index formula. 

\section*{Acknowledgements}
I thank the referees for very helpful remarks that lead to significant improvements. For useful discussions I am thankful to Bernd Ammann, Ulrich Bunke, Alexander Engel, Jean-Marie Lescure and Elmar Schrohe.  
This research was supported by the DFG-SPP 2026 `Geometry at Infinity' program.


\begin{thebibliography}{99}
\bibitem{as} M. F. Atiyah, I. M. Singer, \emph{The index of elliptic operators, I}, Ann. of Math., 87 (1968), pp. 484-530.
\bibitem{asIV} M. F: Atiyah, I. M. Singer, \emph{The index of elliptic operators, IV}, Ann. of Math., 93 (1971), pp. 119-138.
\bibitem{as2011} I. Androulidakis, G. Skandalis, \emph{The analytic index of elliptic pseudodifferential operators on a singular foliation}, J. K-Theory 8 (2011), no. 3, 363--385.  
\bibitem{bcln} A. Baldare, R. C\^{o}me, M. Lesch, V. Nistor, \emph{Fredholm conditions for invariant operators: finite abelian groups and boundary value problems}, Max Planck and Arxiv Preprint, 2019, to appear in JOT.
\bibitem{bgpr} U. Battisti, T. Gramchev, S. Pilipovic, L. Rodino, \emph{Globally bisingular elliptic operators}, The Vladimir Rabinovich Anniversary Volume of the Operator Theory: Advances and Applications (2011).
\bibitem{blackadar} B. Blackadar, \emph{K-theory for operator algebras}, Vol. 5. Cambridge University Press, 1998.
\bibitem{bohlen} K. Bohlen, \emph{The $K$-Theory of bisingular pseudodifferential algebras}, J. Pseudo-Differ. Oper. Appl. (2015) 6:361-382.
\bibitem{bohlen2} K. Bohlen, \emph{On the $\eta$-function for bisingular pseudodifferential operators}, Math. Nachr. 290 (2017), 169-186.
\bibitem{bl} K. Bohlen, J. M. Lescure, \emph{A geometric approach to $K$-homology for Lie manifolds}, to appear Annales de l'ENS, \url{https://hal.archives-ouvertes.fr/hal-02090537}. 
\bibitem{bohlenschrohe} K. Bohlen, E. Schrohe, \emph{Getzler rescaling via adiabatic deformation and a renormalized index formula}, J. Math. Pures Appl. (9) 120 (2018), 220--252.
\bibitem{bs} M. Borsero, J. Seiler, \emph{On the Fredholm property of bisingular pseudodifferential operators}, Integral Equations Operator Theory 84 (2016), no. 3, 301--321.
\bibitem{bs2} C. Baer, A. Strohmaier, \emph{An index theorem for Lorentzian manifolds with compact spacelike Cauchy boundary}, American Journal of Mathematics 141.5 (2019): 1421-1455.
\bibitem{clm} P. Carrillo Rouse, J.-M. Lescure, B. Monthubert, \emph{A cohomological formula for the Atiyah-Patodi-Singer index on manifolds with boundary}, J. Top. Anal. 6 (2014), no. 1, 27--74.
\bibitem{cmr} J. Cuntz, R. Meyer, J. M. Rosenberg, \emph{Topological and Bivariant $K$-Theory}, Oberwolfach Seminars, Vol. 36, Birkh\"auser, 2007. 
\bibitem{cnq} C. Carvalho, V. Nistor, Y. Qiao, \emph{Fredholm conditions on non-compact manifolds: theory and examples}, Operator Theory, Operator Algebras, and Matrix Theory. Birkh\"auser, Cham, 2018. 79-122.
\bibitem{connes} A. Connes, \emph{Noncommutative Geometry}, Academic Press, 1994.
\bibitem{dr} C. Anantharaman-Delaroche, J. Renault, \emph{Amenable groupoids}, Monographies de L'Enseignement Math\'ematique, 36, L'Enseignement Math\'ematique, Geneva, 2000.
\bibitem{ds} C. Debord, G. Skandalis, \emph{Lie groupoids, exact sequences, Connes-Thom elements, connecting maps and index maps}, J. Geom. Phys. 129 (2018), 255--268.
\bibitem{dl} C. Debord, J. M. Lescure, \emph{K-duality for stratified pseudomanifolds}, Geometry \& Topology, 13(1), 49-86, 2009.
\bibitem{dln} C. Debord, J. M. Lescure, V. Nistor, \emph{Groupoids and an index theorem for conical pseudo-manifolds}, Journal f\"ur die reine und angewandte Mathematik (Crelles Journal), 2009(628), 1-35.
\bibitem{enn} G. A. Elliott, T. Natsume, R. Nest, \emph{The Heisenberg group and K-theory}, K-Theory, No. 7, 1993, 409-428.
\bibitem{georgescu} V. Georgescu, \emph{On the structure of the essential spectrum of elliptic operators on metric spaces}, J. Funct. Anal., 260(6):1734–1765, 2011.
\bibitem{gelfand} I. M. Gel'fand, \emph{On elliptic equations}, Russ. Math. Surv., 15 (3): 113–123.
\bibitem{hoermander} L. H\"ormander, \emph{The analysis of linear partial differential operators III: Pseudo-differential operators}, Springer Science \& Business Media, 2007.
\bibitem{karoubi} M. Karoubi, \emph{K-theory. An introduction}. Reprint of the 1978 edition. Classics in Mathematics. Springer-Verlag, Berlin, 2008. 
\bibitem{l} N. P. Landsman, \emph{Deformation quantization and the Baum-Connes conjecture}, Communications in mathematical physics 237.1 (2003): 87-103.
\bibitem{ln} R. Lauter, V. Nistor, \emph{Analysis of geometric operators on open manifolds: a groupoid approach}, Quantization of singular symplectic quotients. Birkhäuser, Basel, 2001. 181-229.
\bibitem{lr} N. P. Landsman, B. Ramazan, \emph{Quantization of Poisson algebras associated to Lie algebroids}, Contemporary Mathematics 282 (2001): 159-192.
\bibitem{mackenzie} K. Mackenzie, \emph{General theory of Lie groupoids and Lie algebroids}, volume 213 of LMS Lect. Note Series. Cambridge U. Press, Cambridge, 2005.
\bibitem{mantoiu} M. Mantoiu, \emph{Essential Spectrum and Fredholm Properties for Operators on Locally Compact Groups}, \url{arXiv:1510.05308}.
\bibitem{monthubert} B. Monthubert, \emph{Pseudodifferential calculus on manifolds with corners and groupoids}, Proceedings of the American Mathematical Society 127.10 (1999): 2871-2881.
\bibitem{mn} B. Monthubert, V. Nistor, \emph{A topological index theorem for manifolds with corners}, Compos. Math. 148 (2012), no. 2, 640--668.
\bibitem{mr} R. Melrose, F. Rochon, \emph{Index in {$K$}-theory for families of fibred cusp operators}, $K$-Theory 37, no. 1-2:25--104 (2006).
\bibitem{nr} F. Nicola, L. Rodino, \emph{Residues and Index for Bisingular Operators}, $C^{\ast}$-algebras and Elliptic 
Theory, Trends in Mathematics, 187-202, 2006.
\bibitem{nwx} V. Nistor, A. Weinstein, P. Xu, \emph{Pseudodifferential Operators on Differential Groupoids}, Pacific J. Math. 189 (1999), 117-152.
\bibitem{palais} R. S. Palais, \emph{Seminar on the Atiyah-Singer index theorem}, with contributions by M. F. Atiyah, A. Borel, E. E. Floyd, R. T. Seeley, W. Shih and R. Solovay, Annals of Mathematics Studies, No. 57 Princeton University Press, Princeton, N.J. 1965 {\rm x}+366 pp.
\bibitem{pz} P. Piazza, V. Zenobi, \emph{Singular spaces, groupoids and metrics of positive scalar curvature}, Journal of Geometry and Physics 137 (2019): 87-123.
\bibitem{rrs} V. Rabinovich, S. Roch, B. Silbermann, \emph{Limit operators and their applications in operator theory}, volume 150 of Operator Theory: Advances and Applications. Birkh\"auser, 2004.
\bibitem{rodino} L. Rodino, \emph{A class of pseudo-differential operators on the product of two manifolds and applications}, Ann. Scuola Norm. Sup. Pisa, ser. IV, 2 (1975), 287-302.
\bibitem{savin} A. Savin, \emph{Elliptic operators on manifolds with singularities and $K$-homology}, $K$-Theory  34  (2005), no. 1, 71--98.
\bibitem{schroheschulze} E. Schrohe, B.-W. Schulze, \emph{Mellin and Green Symbols for Boundary Value Problems on Manifolds with Edges}, Integral Equations Operator Theory 34, 339-- 363 (1999). 
\bibitem{tu} J.-L. Tu, \emph{Non-Hausdorff Groupoids, Proper Actions and K-Theory}, Documenta Mathematica 9 (2004): 565-597.
\end{thebibliography}
\end{document}